\documentclass[letterpaper, 10pt, conference]{ieeeconf}
\IEEEoverridecommandlockouts
\usepackage{amsmath,amssymb,amsfonts,amsthm}
\usepackage{graphicx}
\usepackage{cite}

\newtheorem{theorem}{Theorem}
\newtheorem{lemma}{Lemma}
\newtheorem{proposition}{Proposition}
\newtheorem{corollary}{Corollary}
\newtheorem{definition}{Definition}
\newtheorem{remark}{Remark}

\newcommand{\R}{\mathbb{R}}
\newcommand{\eps}{\varepsilon}

\title{\LARGE \bf Fundamental Limits of Prescribed-Time Regulation:\\ Gain Budgets, Accuracy Frontiers, and Arrival Costs}

\author{Osama~F.~Abdel~Aal, Jiale~Chen, Jairo~Viola, and YangQuan~Chen$^{\dagger}$%
\thanks{Osama F. Abdel Aal, Jiale Chen, Jairo Viola, and YangQuan Chen are with the Department of Mechanical and Aerospace Engineering, University of California, Merced, CA 95343, USA \texttt{\{oabdelaal,jialechen2,jviola,ychen53\}@ucmerced.edu}}%
\thanks{Jiale Chen is also with the School of Mathematical Sciences, Hangzhou Dianzi University, Hangzhou 310018, China.}%
\thanks{$^{\dagger}$Deceased. See the Acknowledgment.}}

\begin{document}
\maketitle
\thispagestyle{empty}
\pagestyle{empty}

\begin{abstract}
Prescribed-time controllers drive the state to the origin at a time $T$ chosen in advance, with gains that grow without bound as $t$ approaches $T$. This paper shows the growth is a necessity and computes the exact price of avoiding it. The starting point is a simple estimate: any trajectory with $\|\dot z(t)\|\le L(t)\|z(t)\|$ satisfies $\|z(t)\|\ge\|z(0)\|e^{-\int_0^t L}$. From this: a gain profile $L$ supports exact prescribed-time convergence, over the unrestricted class of closed-loop vector fields admitting it, if and only if its integral over $[0,T)$ is infinite; so every design must either use an envelope of infinite integral or violate the linear growth bound at the origin, as finite-time and fixed-time designs do. Under a finite gain budget $\mathcal B=\int_0^T L$ the best worst-case contraction over the class is exactly $e^{-\mathcal B}$, a floor valid for every plant, and a finite-budget trajectory converges to a definite point away from the origin. A comparison lemma then carries the theory beyond linear growth: for envelopes $\|\dot z\|\le L(t)\,\omega(\|z\|)$, first arrival at the origin requires budget at least $\int_0^{\|z_0\|}du/\omega(u)$; arrival is impossible whenever this integral diverges, and when it is finite it is the exact threshold, attained inside the class. The classical settling expressions of finite-time control reappear as hitting-time floors, and the known fixed-time condition is the two-ended convergence of the same integral. For linear time-varying loops the results refine to logarithmic-norm budgets, corollaries of a known two-sided estimate, with the terminal error of gain freezing described exactly.
\end{abstract}

\begin{keywords}
Prescribed-time control, finite-time and fixed-time stability, time-varying gains, Prescribed-time gain--accuracy trade-off, converse results.
\end{keywords}

\section{Introduction}

Prescribed-time (PT) control asks for convergence to the origin at a time $T$ fixed in advance by the designer, uniformly over initial conditions. Since the time-varying-gain design of Song, Wang, Holloway and Krsti\'c \cite{SongKrstic2017}, the area has grown quickly: observers \cite{HollowayKrstic2019,Espitia2022}, dynamic high-gain designs \cite{Krishnamurthy2020}, linear time-varying feedback for nonlinear systems \cite{ZhouShi2021,ZhangZhouDuan2024}, time-space deformation \cite{Orlov2022}, differentiators \cite{OrlovCSL2022}, decentralized schemes \cite{Ye2024}, prescribed-performance tracking \cite{LiMaTian2024}; see the survey \cite{SongSurvey2023}. These designs share a signature: a gain growing like $(T-t)^{-1}$ or faster as $t\to T$.

Whether this growth is truly necessary is debated: gain-freezing implementations stop it shortly before $T$ and accept a terminal error \cite{Shakouri2022,Zhou2020}, while recent work announces PT convergence with bounded gain \cite{SongSu2023}. By comparison, converse results are scarce; we are aware of two lines. Aldana-L\'opez et al.\ \cite{Aldana2023} proved that a class of PT algorithms is fragile under measurement noise, a direction orthogonal to ours. Closest to this paper, the technical communiqu\'e of Shakouri \cite{Shakouri2022} proved, for linear time-varying (LTV) systems under the hypothesis that every entry of $A(t)$ has a limit as $t\to T^-$, that PT attractivity forces $\|A(t)\|\to\infty$; the note also contains a sufficient condition, an exact non-constructive characterization, and the gain-freezing implementation. Its Lemma~1 is the two-sided logarithmic-norm estimate of Vr\'abel \cite{Vrabel2020}, with roots in \cite{Coppel1965,Strom1975,Soderlind2006}; our LTV statements in Section~\ref{sec:ltv} are corollaries of that estimate which \cite{Shakouri2022} did not state: the integral form of the necessity, free of the entrywise-limit hypothesis. The main contributions concern a different question: not whether a given linear system is PT attractive, but what \emph{any} closed loop, of any structure, can achieve under a stated gain constraint.

In this paper, we characterize exactly which gain envelopes support exact PT convergence over the unrestricted class of closed-loop vector fields admitting them: a locally integrable envelope works if and only if $\int_0^T L=\infty$ (Theorem \ref{thm:char}), with necessity a lower Gr\"onwall bound along every trajectory of every plant and sufficiency from a design inside the class, so that no envelope bounded by $C(T-t)^{-\theta}$ with $\theta<1$ can work and, among power-law envelopes, the exponent $\theta=1$ is the smallest admissible. We then compute the exact accuracy frontier under a finite budget $\mathcal B=\int_0^TL$: the smallest worst-case terminal contraction over the class equals $e^{-\mathcal B}$ (Theorem \ref{thm:frontier}), a floor binding every trajectory of every plant, attained at class level, while its realization on a specified plant is a separate question (Remark \ref{rem:plant}). We carry the floors beyond linear growth: for envelopes $\|\dot z\|\le L(t)\,\omega(\|z\|)$, one comparison lemma assigns every modulus the arrival cost $\mathcal C(r)=\int_0^{r}du/\omega(u)$ and yields first-arrival floors for every admissible trajectory in every dimension (Section \ref{sec:osgood}); divergence of this integral at the origin makes arrival impossible under any finite budget, the classical settling expressions of finite-time control \cite{Haimo1986,BhatBernstein2000,MoulayPerruquetti2008} reappear as floors attained by the inward radial field, and the fixed-time condition of \cite{Polyakov2012} is the two-ended convergence of the same integral. Finally, for LTV loops we give the integral and essential-supremum forms of the singularity necessity, free of extra assumptions, as corollaries of the known two-sided estimate, the complete scalar and diagonal characterization, and the exact terminal-error spectrum of gain freezing, with our budget floor beneath it (Section \ref{sec:ltv}).

We see these results the way converse bounds usually function: not as an obstacle to the constructive literature \cite{SongKrstic2017,Krishnamurthy2020,ZhouShi2021,Orlov2022,Ye2024}, but as the limits that come with it.

\emph{Notation.} $\|\cdot\|$ is the Euclidean norm on vectors and the induced norm on matrices, $\|\cdot\|_p$ the $p$-norm family, and $|\cdot|$ the absolute value of a scalar. For measurable $g$, $g^+=\max(g,0)$. AC means absolutely continuous; a.e.\ refers to Lebesgue measure. $\sigma_{\min},\sigma_{\max}$ are extreme singular values. The logarithmic norm is $\mu_p(M)=\lim_{h\to0^+}(\|I+hM\|_p-1)/h$; it is Lipschitz in $M$ with $|\mu_p(M)|\le\|M\|_p$ \cite{Strom1975,Soderlind2006}.

\section{Setting}\label{sec:setting}

\begin{definition}\label{def:env}
Let $z:[0,T)\to\R^n$ be AC. A measurable $L:[0,T)\to[0,\infty)$ is a \emph{gain envelope} for $z$ if
\begin{equation}\label{eq:env}
\|\dot z(t)\|\ \le\ L(t)\,\|z(t)\|\qquad\text{a.e. }t\in[0,T).
\end{equation}
A closed loop $\dot z=f(t,z)$, with $f$ Carath\'eodory, admits the envelope if $\|f(t,z)\|\le L(t)\|z\|$ for all $z$ and a.e.\ $t$. For $\dot z=(A_0+BK(t))z$ one may take $L=\|A_0+BK(\cdot)\|$. The \emph{budget} is $\mathcal B=\int_0^TL(t)\,dt$, and $\bar L$ denotes a \emph{peak gain}, a constant with $L(t)\le\bar L$ (so $\mathcal B\le\bar LT$).
\end{definition}

Condition \eqref{eq:env} is a linear \emph{growth} bound on the field, not linearity: the class is genuinely nonlinear, containing $f(t,z)=-L(t)z\sin^2\|z\|$, saturated feedback $-L(t)\mathrm{sat}(z)$, state-dependent gains $-k(t,z)z$ with $0\le k\le L$, and switching among such fields. A field can fail to admit a global envelope in more than one way; the failure relevant to time regulation is an unbounded ratio $\|f(t,z)\|/\|z\|$ near the origin, as for the fractional-power fields $-\|z\|^{\alpha-1}z$, $\alpha\in(0,1)$; this failure is route (b) of Corollary \ref{cor:routes}, and Section \ref{sec:osgood} covers that side. The bound \eqref{eq:env} neither implies nor is implied by Lipschitz continuity of $f$ away from the origin, and no proof below uses any regularity of $f$ beyond \eqref{eq:env} along the trajectory at hand, so static, dynamic, output, switching and hybrid loops are covered uniformly. The linearity of the attaining designs is a conclusion, not an assumption: within the class, nonlinearity cannot improve on linear feedback. The choice of norm is free: the floors hold in every norm with the same proofs, the envelope and budget measured in that norm.

\begin{remark}[Budget versus input effort]\label{rem:effort}
$\mathcal B$ is the integral of the \emph{envelope}, not of the input, and the two must not be identified. The scalar loop $\dot x=u$ with $u=-x/(T-t)$ has solution $x(t)=x_0(T-t)/T$: its budget is infinite, while its input is the finite constant $u(t)=-x_0/T$. Budget floors are statements about gain accumulation; limits in input energy are a separate question.
\end{remark}

Given $T>0$ and an AC trajectory $z$ on $[0,T)$ with $z(0)=z_0\ne0$: $z$ is \emph{exactly PT-regulated at $T$} if $z(t)\to0$ as $t\to T^-$; \emph{$\eps$-accurate at $T$} if $\liminf_{t\to T^-}\|z(t)\|\le\eps\|z_0\|$; its \emph{first arrival time} is $t_{\mathrm{hit}}=\inf\{t: z(t)=0\}$ ($\infty$ if the state never vanishes). A system is \emph{PT attractive} at $T$ if every solution is exactly PT-regulated \cite{Shakouri2022}. Results are stated in terms of first arrival; reading arrival as settling additionally requires origin invariance after arrival, which the speed bound alone does not guarantee under non-uniqueness, and which the attaining designs below provide.

\section{Gain budgets and the accuracy frontier}\label{sec:main}

\begin{lemma}\label{lem:gronwall}
Let $z:[0,\tau]\to\R^n$ be AC with $z(0)=z_0\ne0$, and let $L\in L^1(0,\tau)$ be a gain envelope for $z$. Then $z(t)\ne0$ on $[0,\tau]$ and
\begin{equation}\label{eq:lower}
\|z(t)\|\ \ge\ \|z_0\|\exp\Big(-\!\int_0^t L(s)\,ds\Big),\qquad t\in[0,\tau].
\end{equation}
\end{lemma}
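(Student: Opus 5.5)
The plan is to study $r(t)=\|z(t)\|$ and to work with $\log r$ only on the interval where $z$ stays nonzero. This handles the nonvanishing claim and the bound together.

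First, $r$ is AC on $[0,\tau]$, since $|\|z(t)\|-\|z(s)\||\le\|z(t)-z(s)\|$. At every point where both $z$ and $r$ are differentiable (a.e.), we have $|\dot r(t)|\le\|\dot z(t)\|$. This follows from the same reverse triangle inequality applied to difference quotients, and it needs no smoothness of the norm, so it holds for any norm, as the paper's remark on norm choice requires. Combining with \eqref{eq:env} gives $|\dot r|\le L r$ a.e.

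Next, let $t^*=\inf\{t\in[0,\tau]:z(t)=0\}$, and suppose for contradiction that $t^*\le\tau$ is attained, so $r(t^*)=0$ and $r>0$ on $[0,t^*)$. For any $t<t^*$, $r$ is bounded below by a positive constant on $[0,t]$. Hence $\log r$ is AC there, being the composition of a Lipschitz function on $[\min r,\max r]$ with an AC function. Its derivative is $\dot r/r\ge -L$ a.e., so
\[
\log r(t)\ \ge\ \log r(0)-\int_0^t L(s)\,ds \ \ge\ \log\|z_0\|-\int_0^{\tau}L(s)\,ds .
\]
The right-hand side is finite because $L\in L^1(0,\tau)$. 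Thus $r(t)\ge \|z_0\|e^{-\int_0^\tau L}>0$ uniformly for $t<t^*$, and continuity gives $r(t^*)>0$, a contradiction. Therefore $z\ne0$ on all of $[0,\tau]$. Applying the same integration of $\frac{d}{dt}\log r\ge -L$ on $[0,t]$ for each $t\in[0,\tau]$, then exponentiating, yields \eqref{eq:lower}.

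The main obstacle is the step ruling out a zero. Near a zero $\log r$ is not a priori AC, so the argument must work on compact subintervals $[0,t]$ with $t<t^*$, where $r$ is bounded away from zero. The lower bound obtained there is uniform in $t$, and only then does one pass to the limit via continuity of $r$. Integrability of $L$ is exactly what makes that uniform bound positive; this foreshadows the role of an infinite budget in Theorem \ref{thm:char}.
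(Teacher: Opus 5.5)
Your proof is correct and follows essentially the same route as the paper. Both arguments integrate $\frac{d}{dt}\log\|z\|\ge -L$ on compact subintervals before the first zero $t^*$, and use integrability of $L$ to get a positive lower bound. Continuity of $z$ at $t^*$ then rules out a zero, after which the bound holds on all of $[0,\tau]$.
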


\begin{proof}
Let $t^*=\inf\{t\in[0,\tau]:z(t)=0\}$, with $t^*=+\infty$ if there is no such time; continuity and $z_0\ne0$ give $t^*>0$. Fix $\tau'<\min(t^*,\tau]$. On $[0,\tau']$ the continuous function $\|z\|$ has a positive minimum, and it is AC with $|\frac{d}{dt}\|z\||\le\|\dot z\|$ a.e., because $\big|\|z(t)\|-\|z(s)\|\big|\le\|z(t)-z(s)\|$. Hence $\varphi=\log\|z\|$ is AC on $[0,\tau']$ with $\dot\varphi\ge-\|\dot z\|/\|z\|\ge-L$ a.e., and integrating gives \eqref{eq:lower} on $[0,\tau']$. If $t^*\le\tau$, let $\tau'\uparrow t^*$: continuity of $z$ gives $\|z(t^*)\|\ge\|z_0\|e^{-\int_0^{t^*}L}>0$ because $L$ is integrable, contradicting $z(t^*)=0$. So $t^*>\tau$ and \eqref{eq:lower} holds on $[0,\tau]$.
\end{proof}

\begin{theorem}\label{thm:necessity}
Let $z$ be an AC trajectory on $[0,T)$ with $z(0)=z_0\ne0$ and gain envelope $L\in L^1_{\mathrm{loc}}([0,T))$.
(i) If $\int_0^T L<\infty$, then $\inf_{t\in[0,T)}\|z(t)\|\ge\|z_0\|e^{-\int_0^T L}>0$; in particular $z$ is not exactly PT-regulated at $T$ or earlier, and $t_{\mathrm{hit}}=\infty$.
(ii) If $z$ is $\eps$-accurate at $T$ with $\eps\in(0,1)$, then $\int_0^T L\ge\log(1/\eps)$.
(iii) If $L\le\bar L$, then $\|z(t)\|\ge e^{-\bar LT}\|z_0\|$ on $[0,T)$.
\end{theorem}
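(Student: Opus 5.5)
All three parts follow from Lemma~\ref{lem:gronwall} applied on compact subintervals $[0,\tau]$ with $\tau<T$. There $L\in L^1(0,\tau)$ by local integrability, so the lemma applies directly. It gives $z(t)\ne0$ on $[0,\tau]$ and
\[
\|z(t)\|\ge\|z_0\|\exp\Big(-\int_0^tL\Big),\qquad t\in[0,\tau].
\]
Since $\tau<T$ is arbitrary, both conclusions hold on all of $[0,T)$. No further regularity of $z$ or of any closed-loop field is needed; only the envelope inequality along the given trajectory enters, through the lemma.

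For (i), I bound $\int_0^tL\le\int_0^TL=\mathcal B<\infty$ for every $t<T$, using $L\ge0$. This gives $\|z(t)\|\ge\|z_0\|e^{-\mathcal B}$ uniformly on $[0,T)$, hence the stated infimum bound. Then $\liminf_{t\to T^-}\|z(t)\|>0$, so $z(t)\not\to0$ and $z$ is not exactly PT-regulated at $T$. For ``or earlier'': convergence to $0$ at some $T'<T$, or vanishing at any $t<T$, is excluded by the same positive lower bound together with continuity. Since $z$ never vanishes on $[0,T)$, $t_{\mathrm{hit}}=\infty$ under the paper's convention for a trajectory defined on $[0,T)$.

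For (ii), I argue by contraposition. If $\mathcal B<\log(1/\eps)$, then (i) gives $\liminf_{t\to T^-}\|z(t)\|\ge e^{-\mathcal B}\|z_0\|>\eps\|z_0\|$, contradicting $\eps$-accuracy. If $\mathcal B=\infty$ the claim is trivial. Hence $\mathcal B\ge\log(1/\eps)$. Equivalently, taking $\liminf$ in the bound of (i) gives $\eps\ge e^{-\mathcal B}$ directly.

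For (iii), $L\le\bar L$ gives $\int_0^tL\le\bar Lt\le\bar LT$, and the lemma's bound yields the claim.

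The only point needing care, rather than a real obstacle, is that the lemma requires $L\in L^1$ on a closed interval. That is why I work on $[0,\tau]$ with $\tau<T$ and pass to the limit, rather than applying the lemma on $[0,T]$, where $z$ need not even be defined.
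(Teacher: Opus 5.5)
Your proposal is correct and takes the same approach as the paper's proof. Both apply Lemma~\ref{lem:gronwall} on each $[0,\tau]$ with $\tau<T$, then use $\int_0^tL\le\mathcal B$ for (i), take the $\liminf$ for (ii), and use $\int_0^tL\le\bar LT$ for (iii); you add only the bookkeeping the paper leaves implicit, such as the trivial case $\mathcal B=\infty$ in (ii).
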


\begin{proof}
(i) follows from Lemma \ref{lem:gronwall} on $[0,\tau]$ for each $\tau<T$. (ii) combines (i) with the definition. (iii) is (i) with $\int_0^TL\le\bar LT$.
\end{proof}

\begin{theorem}\label{thm:char}
Let $L\in L^1_{\mathrm{loc}}([0,T);[0,\infty))$. There exists a closed-loop vector field admitting the envelope $L$ that is PT attractive at $T$ if and only if $\int_0^t L\to\infty$ as $t\to T^-$.
\end{theorem}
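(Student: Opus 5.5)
The proof splits into the two directions, using Theorem~\ref{thm:necessity} for necessity and an explicit linear design for sufficiency.

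\emph{Necessity.} Suppose some field $f$ admitting $L$ is PT attractive at $T$, and suppose $\int_0^tL$ does not tend to $\infty$. Because $L\ge0$, the map $t\mapsto\int_0^tL$ is nondecreasing, so its limit as $t\to T^-$ exists. If that limit is finite, then $\int_0^TL<\infty$. Pick any $z_0\ne0$ and any solution $z$ of $\dot z=f(t,z)$ from $z_0$; a Carath\'eodory solution exists at least locally. By the growth bound, $L$ is a gain envelope for $z$ along its interval of existence. Theorem~\ref{thm:necessity}(i) then gives $\|z(t)\|\ge\|z_0\|e^{-\int_0^TL}>0$, so $z$ does not tend to $0$. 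This contradicts PT attractivity. One technical point needs care: if the solution is not defined on all of $[0,T)$, the definition of PT attractivity already requires solutions on $[0,T)$. Alternatively, the growth bound with $L\in L^1_{\mathrm{loc}}$ rules out finite escape, since the upper Gr\"onwall estimate $\|z(t)\|\le\|z_0\|e^{\int_0^tL}$ holds. I would note this briefly.

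\emph{Sufficiency.} Assume $\Lambda(t)=\int_0^tL\to\infty$ as $t\to T^-$, and take the linear field $f(t,z)=-L(t)z$. It is Carath\'eodory: it is measurable in $t$, linear in $z$, and locally dominated by $L\|z\|$ with $L$ locally integrable. It admits the envelope with equality. On $[0,T)$ its solutions are unique by the linear theory with an $L^1_{\mathrm{loc}}$ coefficient, and they are explicitly $z(t)=z_0e^{-\Lambda(t)}$, an AC function. Hence $\|z(t)\|=\|z_0\|e^{-\Lambda(t)}\to0$ as $t\to T^-$ for every $z_0$, which is PT attractivity. The trivial solution from $z_0=0$ also qualifies.

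The only delicate step is the well-posedness bookkeeping: existence and uniqueness of solutions for the sufficiency field, and existence on the whole interval $[0,T)$ in the necessity argument. Uniqueness matters for sufficiency because PT attractivity quantifies over every solution. For the linear field with a locally integrable coefficient, uniqueness follows from Gr\"onwall's inequality applied to the difference of two solutions. For necessity, any single solution from some $z_0\ne0$ that lives on $[0,T)$ suffices, so non-uniqueness causes no trouble there.
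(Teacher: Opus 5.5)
Your proof is correct and follows the paper's route exactly. Necessity comes from Theorem~\ref{thm:necessity}(i); the paper leaves implicit the monotonicity of $\int_0^tL$ and the existence of a solution on all of $[0,T)$, which your Carath\'eodory existence plus upper Gr\"onwall (no finite escape) argument supplies. Sufficiency comes from the radial field $f(t,z)=-L(t)z$ with its unique explicit solution $z_0e^{-\int_0^tL}$; of your two justifications for global existence, rely on the second, since the first would let PT attractivity hold vacuously if no solution reached $T$.
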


\begin{proof}
Necessity is Theorem \ref{thm:necessity}(i). For sufficiency take $f(t,z)=-L(t)z$: since $L$ is locally integrable, the unique AC solution from any $z_0$ is $z(t)=z_0e^{-\int_0^tL}$, the envelope holds with equality, and the integral condition gives $z(t)\to0$.
\end{proof}

\begin{remark}
The condition characterizes the existence of a design within the class, not the behavior of every design in it: $\dot x=+L(t)x$ has the same envelope and diverges.
\end{remark}

\begin{corollary}\label{cor:order}
No closed loop with envelope $L(t)\le C(T-t)^{-\theta}$, $C>0$, $\theta\in(0,1)$, is PT attractive at $T$, although the comparison function $C(T-t)^{-\theta}$ diverges pointwise, since $\int_0^T C(T-s)^{-\theta}ds=CT^{1-\theta}/(1-\theta)<\infty$. Among power-law envelopes, $\theta=1$ is therefore the smallest admissible exponent, and by Theorem \ref{thm:char} it is admissible. Slower divergences can also be admissible: $L(t)=\big[(T-t)\log\frac{eT}{T-t}\big]^{-1}$ has budget $\log\log\frac{eT}{T-t}\to\infty$, so no claim of a generally minimal growth order is made.
\end{corollary}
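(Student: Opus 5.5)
The corollary has three claims, each reduced to a budget computation that is fed into Theorem~\ref{thm:necessity}(i) or Theorem~\ref{thm:char}.

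\emph{Non-attractivity for $\theta\in(0,1)$.} Take any closed loop admitting an envelope with $L(t)\le C(T-t)^{-\theta}$. Along any solution with $z_0\ne0$, the function $L$ is a gain envelope for that trajectory. Since $L$ is dominated by a function integrable on $[0,T)$, it lies in $L^1_{\mathrm{loc}}$. Monotonicity of the integral then gives
\[
\int_0^T L\le \int_0^T C(T-s)^{-\theta}ds.
\]
The substitution $u=T-s$ turns the right side into $C\int_0^T u^{-\theta}du=CT^{1-\theta}/(1-\theta)<\infty$, because $\theta<1$. Theorem~\ref{thm:necessity}(i) now gives $\inf_t\|z(t)\|\ge\|z_0\|e^{-CT^{1-\theta}/(1-\theta)}>0$. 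So no solution from $z_0\ne0$ is exactly PT-regulated, and the loop is not PT attractive. The pointwise divergence of the comparison function plays no role; it only stresses that the integral, not the pointwise size, is what matters.

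\emph{Admissibility of $\theta=1$.} Take $L(t)=C(T-t)^{-1}$, which is locally integrable on $[0,T)$. We have $\int_0^t L=C\log\frac{T}{T-t}\to\infty$ as $t\to T^-$. Theorem~\ref{thm:char}, realized by the design $f=-L(t)z$, yields a PT-attractive loop. Explicitly, its solutions are $z(t)=z_0((T-t)/T)^C$. Together with the first part, this shows $\theta=1$ is the smallest admissible power-law exponent. The case $\theta\ge1$ gives admissibility similarly, since the integral diverges.

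\emph{The slower example.} Set $g(t)=\log\frac{eT}{T-t}=1+\log T-\log(T-t)$. Then $g'(t)=1/(T-t)$ and $g\ge1$ on $[0,T)$. Hence $L=g'/g$ is positive, continuous and locally integrable, and
\[
\int_0^t L=\log g(t)-\log g(0)=\log\log\frac{eT}{T-t},
\]
using $g(0)=1$. This diverges as $t\to T^-$, so Theorem~\ref{thm:char} applies. The envelope is $o((T-t)^{-1})$, so it is eventually below $\eps(T-t)^{-1}$ for every $\eps>0$. This is why no minimal growth order is claimed.

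There is no real obstacle. The only points needing care are the antiderivative check in the last part and noting that the domination in the first part ensures $L\in L^1_{\mathrm{loc}}$, as Theorem~\ref{thm:necessity} requires.
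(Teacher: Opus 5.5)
Your proposal is correct and follows the paper's own argument. The paper's proof consists of the budget computation $\int_0^T C(T-s)^{-\theta}ds=CT^{1-\theta}/(1-\theta)<\infty$ fed into Theorem~\ref{thm:necessity}(i) for $\theta<1$, and the divergence of $\int_0^t L$ fed into Theorem~\ref{thm:char} for $\theta=1$ and for the logarithmic example; your added details are correct and simply make explicit what the paper leaves implicit, namely that domination by an integrable function gives $L\in L^1$, and the antiderivative check $L=g'/g$ with $g(0)=1$ giving the budget $\log\log\frac{eT}{T-t}$.
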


\begin{theorem}\label{thm:frontier}
Let $L\in L^1(0,T)$ with budget $\mathcal B$. For a closed loop $f$ admitting the envelope $L$, let $\kappa(f)=\sup_{z_0\ne0}\liminf_{t\to T^-}\|z(t;z_0)\|/\|z_0\|$ be its worst-case terminal contraction. Over the unrestricted class of closed-loop vector fields admitting $L$,
\[
\inf_{f}\ \kappa(f)\;=\;e^{-\mathcal B},
\]
and $f(t,z)=-L(t)z$ attains it.
\end{theorem}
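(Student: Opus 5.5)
The plan is to prove the two inequalities $\inf_f\kappa(f)\ge e^{-\mathcal B}$ and $\inf_f\kappa(f)\le e^{-\mathcal B}$ separately. The upper bound will come from the explicit design $f(t,z)=-L(t)z$, and the lower bound from Theorem \ref{thm:necessity} applied along every trajectory of every admissible $f$.

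For the lower bound, fix any Carath\'eodory $f$ admitting $L$ and any $z_0\ne0$, and let $z(\cdot;z_0)$ be an AC solution on $[0,T)$. Since $\|f(t,z)\|\le L(t)\|z\|$ for all $z$ and a.e.\ $t$, the function $L$ is a gain envelope for this trajectory in the sense of Definition \ref{def:env}. Because $L\in L^1(0,T)$, Theorem \ref{thm:necessity}(i) gives $\|z(t;z_0)\|\ge\|z_0\|e^{-\mathcal B}$ for every $t\in[0,T)$. Hence the $\liminf$ ratio is at least $e^{-\mathcal B}$ for every $z_0$. So $\kappa(f)\ge e^{-\mathcal B}$ for every $f$, and taking the infimum over $f$ preserves this.

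One interpretive point needs a short remark. A solution might fail to extend to all of $[0,T)$, or might be non-unique, in which case $\kappa$ would be defined over the solutions considered. Under the envelope bound, a solution cannot escape in finite time, since the upper Gr\"onwall bound gives $\|z\|\le\|z_0\|e^{\int_0^t L}$. Every solution therefore extends to $[0,T)$, and the floor applies to each solution, whichever one the supremum selects.

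For attainment, take $f(t,z)=-L(t)z$. It is Carath\'eodory because $L$ is measurable and integrable. It admits $L$ with equality, and its unique AC solution is $z(t)=z_0e^{-\int_0^tL}$, exactly as in the proof of Theorem \ref{thm:char}. The ratio $\|z(t)\|/\|z_0\|=e^{-\int_0^tL}$ is continuous in $t$ and tends to $e^{-\mathcal B}$ as $t\to T^-$, by integrability and monotone convergence. It is also independent of $z_0$, so $\kappa(f)=e^{-\mathcal B}$. Combining the two bounds gives $\inf_f\kappa(f)=e^{-\mathcal B}$, and this $f$ attains the infimum.

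There is no real obstacle here; the content is already in Lemma \ref{lem:gronwall}. The only step requiring care is the well-posedness remark above: making the definition of $\kappa$ meaningful for arbitrary Carath\'eodory $f$, which may be non-unique, and making sure the floor is uniform over all solutions. That is handled by noting that Theorem \ref{thm:necessity} is a statement about each AC trajectory individually.
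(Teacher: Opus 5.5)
Your proof is correct and follows the paper's argument. The lower bound is Theorem~\ref{thm:necessity}(i) applied along every trajectory of every admissible $f$, and the upper bound is the explicit solution $z_0e^{-\int_0^tL}$ of $\dot z=-L(t)z$; your remark on global existence and non-uniqueness is a valid clarification of what the paper leaves implicit in ``every trajectory of every admissible $f$''.
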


\begin{proof}
For the lower bound, Theorem \ref{thm:necessity}(i) gives $\liminf_{t\to T^-}\|z(t)\|/\|z_0\|\ge e^{-\mathcal B}$ along every trajectory of every admissible $f$. For the upper bound, along $\dot z=-L(t)z$ every solution satisfies $\|z(t)\|/\|z_0\|=e^{-\int_0^tL}$.
\end{proof}

\begin{remark}[Universal floor, class-level attainment]\label{rem:plant}
The floor $e^{-\mathcal B}$ binds every trajectory of every plant, since Theorem \ref{thm:necessity} is a trajectory statement. The attainment is over the unrestricted vector-field class: on a specified plant with fixed structure, e.g.\ the double integrator $\dot z_1=z_2$, $\dot z_2=u$, the equation $\dot z_1=z_2$ is not assignable and the radial optimum need not be realizable by the available input. Characterizing the plant-constrained frontier is a separate, and in our view interesting, open question, which we record explicitly: \emph{for a given plant $\dot x=g(x,u)$ and envelope $L$, determine $\inf\kappa$ over the feedback laws whose closed loop admits $L$.} The universal floor $e^{-\mathcal B}$ bounds it from below for every plant; the numerical section quantifies the gap for one PT design of the double integrator.
\end{remark}

\begin{corollary}\label{cor:routes}
Let a trajectory with $z_0\ne0$ reach the origin by $T$ ($t_{\mathrm{hit}}\le T$) or be exactly PT-regulated at $T$. Then, by Lemma \ref{lem:gronwall}, every nonnegative $L$ satisfying $\|\dot z(t)\|\le L(t)\|z(t)\|$ a.e.\ \emph{along that trajectory} has $\int_0^TL=\infty$. Consequently: (a) if the closed loop admits a global gain envelope, that envelope has infinite integral, with Corollary \ref{cor:order} applying to it; (b) otherwise no global linear-growth bound holds, for instance because $\|f(t,z)\|/\|z\|$ is unbounded near the origin, which is how finite-time and fixed-time designs operate \cite{Haimo1986,BhatBernstein2000,Polyakov2012}. In particular, exact PT regulation with a bounded envelope is impossible; any bounded-envelope scheme, including frozen-gain implementations \cite{Shakouri2022,Zhou2020} and the constructions of \cite{SongSu2023} when they admit an envelope, delivers at best the accuracy $e^{-\mathcal B}$ of Theorem \ref{thm:frontier}.
\end{corollary}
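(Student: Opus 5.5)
The plan is to argue by contraposition from Lemma~\ref{lem:gronwall}, applied on compact subintervals, and then to read off (a), (b) and the final sentence from the statements already proved.

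\emph{Step 1: the main claim.} Fix a trajectory $z$ with $z_0\ne0$ and a nonnegative $L$ satisfying $\|\dot z\|\le L\|z\|$ a.e.\ along it. Suppose, for contradiction, that $\int_0^TL<\infty$. Then $L\in L^1(0,\tau)$ for every $\tau\le T$.

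If $t_{\mathrm{hit}}\le T$, apply Lemma~\ref{lem:gronwall} on $[0,\tau]$ with $\tau=T$. This requires the trajectory to be AC on the closed interval $[0,T]$. When it is defined only on $[0,T)$, apply the lemma on each $[0,\tau]$ with $\tau<T$; this gives $\|z(t)\|\ge\|z_0\|e^{-\int_0^TL}>0$ on $[0,T)$. So the state cannot vanish at any time below $T$. It also cannot vanish at $T$ through continuity, since its norm is bounded away from zero as $t\to T^-$. Hence $t_{\mathrm{hit}}\le T$ is impossible.

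The same uniform lower bound, which is exactly Theorem~\ref{thm:necessity}(i), contradicts $z(t)\to0$ as $t\to T^-$. So the exactly PT-regulated case is ruled out as well. The one point needing care is the bookkeeping at the endpoint, when $t_{\mathrm{hit}}=T$ is reached only as a limit; the closing argument of the lemma's proof handles exactly this case.

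\emph{Step 2: consequences (a) and (b).} These are a case split on whether the closed loop $f$ admits a global envelope.

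In case (a), suppose $f$ admits a global envelope $L$, that is, $\|f(t,z)\|\le L(t)\|z\|$ for all $z$. Then along the given trajectory $\|\dot z\|=\|f(t,z)\|\le L\|z\|$ a.e., so Step 1 applies and gives $\int_0^TL=\infty$. In particular no envelope dominated by $C(T-t)^{-\theta}$ with $\theta<1$ is possible, which is Corollary~\ref{cor:order}.

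In case (b), there is no global envelope, so $\sup_z\|f(t,z)\|/\|z\|$ fails to be a locally integrable bound. The cited finite-time and fixed-time fields are illustrations of this case, not part of the proof.

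\emph{Step 3: the final sentence.}
\begin{itemize}
\item A bounded envelope $L\le\bar L$ has $\int_0^TL\le\bar LT<\infty$. By Step 1, exact PT regulation is then impossible.
\item Any bounded-envelope scheme has finite budget $\mathcal B$. The lower half of the proof of Theorem~\ref{thm:frontier} gives $\liminf_{t\to T^-}\|z(t)\|/\|z_0\|\ge e^{-\mathcal B}$ along every trajectory of such a loop. Frozen-gain implementations and the constructions of \cite{SongSu2023} fall under this as instances, under the stated proviso that they admit an envelope.
\end{itemize}

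I expect no genuine obstacle. The only delicate step is the endpoint case in Step 1, where the trajectory may be defined only on $[0,T)$.
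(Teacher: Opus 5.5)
Your proposal is correct and follows the paper's own route: the main claim is the contrapositive of Lemma~\ref{lem:gronwall}. Applying it on each $[0,\tau]$ with $\tau<T$ gives the uniform bound of Theorem~\ref{thm:necessity}(i), and continuity then rules out arrival at $T$. Items (a), (b) and the closing sentence follow from Corollary~\ref{cor:order} and the lower-bound half of Theorem~\ref{thm:frontier}. The paper compresses all of this into the phrase ``by Lemma~\ref{lem:gronwall}'', and your explicit endpoint bookkeeping is exactly what that phrase abbreviates.
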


\begin{corollary}[Per-decade law]\label{cor:decade}
By Theorem \ref{thm:necessity}(ii), reaching accuracies $10^{-1},\dots,10^{-N}$ requires budget at least $N\ln10$: each decade of accuracy costs a fixed increment of budget. The particular envelope $L(t)=c/(T-t)$ spends $\int_0^t c(T-s)^{-1}ds=c\log\frac{T}{T-t}$, so its budget grows logarithmically in the remaining time.
\end{corollary}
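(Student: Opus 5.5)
The per-decade law has two parts, and I would prove each by direct appeal to results already established. The first part is a lower bound on the budget. The second is an elementary evaluation of one integral.

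\emph{Lower bound.} Fix $N\ge1$ and suppose a trajectory with gain envelope $L$ is $10^{-k}$-accurate at $T$ for every $k=1,\dots,N$. Since $10^{-N}\le10^{-k}$, this is equivalent to $10^{-N}$-accuracy. Theorem~\ref{thm:necessity}(ii) with $\eps=10^{-N}\in(0,1)$ then gives
\[
\int_0^T L\ \ge\ \log(10^{N})=N\ln10 .
\]
The same result applied with $\eps=10^{-k}$ gives the floor $k\ln10$ for the $k$-th decade. The floor $\log(1/\eps)$ is additive across decades, so consecutive floors differ by exactly $\ln10$. This is the ``fixed increment'' statement.

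I would also record a pathwise version: reaching level $10^{-k}\|z_0\|$ at any time $t<T$ requires $\int_0^t L\ge k\ln10$. This follows directly from Lemma~\ref{lem:gronwall} on $[0,t]$. Its role is to show the increments are spent cumulatively along the trajectory, not only in the terminal limit.

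\emph{Envelope computation.} For $L(t)=c/(T-t)$ and $t<T$, substitute $u=T-s$:
\[
\int_0^t \frac{c}{T-s}\,ds=c\int_{T-t}^{T}\frac{du}{u}=c\log\frac{T}{T-t}.
\]
This is logarithmic in the remaining time $T-t$ and diverges as $t\to T^-$. Hence the envelope is admissible in the sense of Theorem~\ref{thm:char}. Combining with the floor, the budget $k\ln10$ for the $k$-th decade can be available by time $t$ only once $c\log\frac{T}{T-t}\ge k\ln10$, that is, once $T-t\le T\,10^{-k/c}$. So each decade of accuracy costs a fixed multiplicative shrinkage of the remaining time.

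There is no real obstacle here. The only point needing care is interpreting ``reaching accuracies $10^{-1},\dots,10^{-N}$'' as $10^{-N}$-accuracy, which the monotonicity argument in the first step justifies.
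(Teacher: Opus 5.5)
Your proposal is correct and follows the paper's own argument: Theorem~\ref{thm:necessity}(ii) with $\eps=10^{-N}$ gives the floor $N\ln10$, and the substitution $u=T-s$ gives the budget of $c/(T-t)$. Your pathwise refinement via Lemma~\ref{lem:gronwall} and the remaining-time form $T-t\le T\,10^{-k/c}$ are correct additions, not needed for the statement; the latter is attained by the radial design $z(t)=z_0\big((T-t)/T\big)^c$.
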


\begin{proposition}\label{thm:limit}
Under the hypotheses of Theorem \ref{thm:necessity} with $\mathcal B=\int_0^TL<\infty$, the limit $z(T):=\lim_{t\to T^-}z(t)$ exists, $z$ extends absolutely continuously to $[0,T]$, and $e^{-\mathcal B}\|z_0\|\le\|z(T)\|\le e^{\mathcal B}\|z_0\|$.
\end{proposition}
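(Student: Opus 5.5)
The plan has two parts: first an upper Grönwall bound along the trajectory, then integrability of $\dot z$ on $[0,T)$, which yields both the limit and the absolutely continuous extension. The lower bound on $\|z(T)\|$ then follows by passing to the limit in Lemma~\ref{lem:gronwall}.

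\emph{Step 1 (upper bound).} On each $[0,\tau]$ with $\tau<T$, Lemma~\ref{lem:gronwall} gives $z(t)\ne0$, so $\varphi=\log\|z\|$ is absolutely continuous there. The same chain-rule argument used in that proof shows $|\dot\varphi|\le\|\dot z\|/\|z\|\le L$ a.e., now using the upper side of the inequality. Integrating gives
\[
\|z_0\|e^{-\int_0^tL}\le\|z(t)\|\le\|z_0\|e^{\int_0^tL}\le e^{\mathcal B}\|z_0\|,\qquad t\in[0,T).
\]
Hence $\sup_{[0,T)}\|z\|\le e^{\mathcal B}\|z_0\|$.

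\emph{Step 2 (integrability of $\dot z$ and the limit).} By \eqref{eq:env} and Step 1, $\|\dot z(t)\|\le L(t)\,e^{\mathcal B}\|z_0\|$ a.e. Since $L\in L^1(0,T)$, this gives $\dot z\in L^1(0,T)$. Absolute continuity on each $[0,\tau]$ gives $z(t)=z_0+\int_0^t\dot z(s)\,ds$ for $t<T$. Because $\dot z$ is integrable on $[0,T)$, dominated convergence (equivalently, the Cauchy criterion $\|z(t)-z(s)\|\le e^{\mathcal B}\|z_0\|\int_s^tL\to0$ as $s,t\to T^-$) shows that $z(T):=z_0+\int_0^T\dot z$ exists as the limit. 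The extended function is then the indefinite integral of an $L^1(0,T)$ function, hence absolutely continuous on $[0,T]$.

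\emph{Step 3 (bounds at $T$).} Let $t\to T^-$ in the two-sided estimate of Step 1. By continuity of the norm and $\int_0^tL\to\mathcal B$, this yields $e^{-\mathcal B}\|z_0\|\le\|z(T)\|\le e^{\mathcal B}\|z_0\|$.

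The only delicate point is that the envelope must be applied with the \emph{a priori} bound from Step 1 to obtain integrability of $\dot z$. Without it, $L\|z\|$ need not be integrable near $T$. The upper exponential bound closes this gap, so I expect no real obstacle; the rest is routine.
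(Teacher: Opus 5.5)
Your proposal is correct and follows essentially the same route as the paper. The paper likewise uses the upper Gr\"onwall bound $\|z(t)\|\le e^{\mathcal B}\|z_0\|$ to make $\|\dot z\|\le L\,e^{\mathcal B}\|z_0\|$ integrable, obtains the limit and the absolutely continuous extension from $z(t)=z_0+\int_0^t\dot z$, and takes the lower bound from Lemma~\ref{lem:gronwall}; the only cosmetic difference is that you derive the upper bound through $\log\|z\|$ rather than citing the standard Gr\"onwall inequality.
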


\begin{proof}
The standard Gr\"onwall bound gives $\|z(t)\|\le\|z_0\|e^{\mathcal B}$, hence $\|\dot z(t)\|\le L(t)\|z_0\|e^{\mathcal B}$ a.e., which is integrable; so $\dot z\in L^1(0,T;\R^n)$ and $z(t)=z_0+\int_0^t\dot z$ converges. Combine with Lemma \ref{lem:gronwall}.
\end{proof}

\section{Arrival costs beyond linear growth}\label{sec:osgood}

This section carries the floors beyond linear growth, through one comparison lemma covering all moduli at once. Let $\omega:(0,\infty)\to(0,\infty)$ be continuous, and consider AC trajectories obeying
\begin{equation}\label{eq:omegaenv}
\|\dot z(t)\|\ \le\ L(t)\,\omega(\|z(t)\|)\qquad\text{a.e.\ on }\{t: z(t)\neq0\},
\end{equation}
with $L\in L^1_{\mathrm{loc}}$ nonnegative. Define the \emph{arrival cost}
\[
\mathcal C(r)\ :=\ \int_0^{r}\frac{du}{\omega(u)}\ \in\ (0,\infty].
\]
The linear modulus gives $\mathcal C\equiv\infty$; the powers $u^{\alpha}$, $\alpha\in(0,1)$, give $\mathcal C(r)=r^{1-\alpha}/(1-\alpha)$.

\begin{theorem}\label{thm:osgood}
Let $z$ be AC on $[0,T)$ with $z(0)=z_0\neq0$ and satisfy \eqref{eq:omegaenv}, and write $r(t)=\|z(t)\|$.
(i) For every $t<t_{\mathrm{hit}}$,
\begin{equation}\label{eq:osgoodbound}
\int_{r(t)}^{r_0}\frac{du}{\omega(u)}\ \le\ \int_0^{t}L(s)\,ds .
\end{equation}
(ii) If $\inf_{t\in[0,T)}r(t)=0$, whether by $r(t)\to0$ as $t\to T^-$ or by arrival at some $t_{\mathrm{hit}}<T$, then $\int_0^TL\ \ge\ \mathcal C(r_0)$. Consequently
\begin{equation}\label{eq:hitfloor}
t_{\mathrm{hit}}\ge\inf\Big\{t:\int_0^tL\ge\mathcal C(r_0)\Big\}\ge\frac{\mathcal C(r_0)}{\bar L}\ \ \text{if }L\le\bar L.
\end{equation}
(iii) If moreover $\omega\in C([0,\infty))$ with $\omega(0)=0$ and $\omega>0$ on $(0,\infty)$, the bounds are exact: the inward radial field $f(t,z)=-L(t)\,\omega(\|z\|)\,z/\|z\|$ for $z\neq0$, extended by $0$ at the origin, is Carath\'eodory, produces $\dot r=-L(t)\omega(r)$, and arrives exactly when the budget first equals $\mathcal C(r_0)$; after arrival the origin is invariant for this field, so arrival is settling.
\end{theorem}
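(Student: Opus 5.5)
The plan is to run the argument of Lemma~\ref{lem:gronwall} with $\log$ replaced by the primitive $\Phi(r)=\int_r^{r_0}du/\omega(u)$, which is $C^1$ on $(0,\infty)$ with $\Phi'(r)=-1/\omega(r)$ because $\omega$ is continuous and positive there.

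For (i), fix $t<t_{\mathrm{hit}}$. On $[0,t]$ the function $r$ is continuous and positive, so it has a positive minimum. As in Lemma~\ref{lem:gronwall}, $r$ is AC with $|\dot r|\le\|\dot z\|$ a.e. Since $\Phi$ is $C^1$ on the compact range of $r$, the composition $\Phi\circ r$ is AC, and the chain rule gives
\[
\tfrac{d}{ds}\Phi(r(s))=-\dot r(s)/\omega(r(s))\le\|\dot z(s)\|/\omega(r(s))\le L(s)\quad\text{a.e.}
\]
Integrating from $0$ to $t$ yields \eqref{eq:osgoodbound}.

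For (ii), suppose first that $t_{\mathrm{hit}}<T$. For $t\uparrow t_{\mathrm{hit}}$ we have $r(t)\to0$, and monotone convergence turns (i) into $\mathcal C(r_0)\le\int_0^{t_{\mathrm{hit}}}L\le\int_0^TL$. This also gives the first inequality in \eqref{eq:hitfloor}: any arrival time $t_{\mathrm{hit}}$ satisfies $\int_0^{t_{\mathrm{hit}}}L\ge\mathcal C(r_0)$, so it lies in the set whose infimum is taken. If instead $t_{\mathrm{hit}}\ge T$ and $\inf r=0$, choose $t_k$ with $r(t_k)\to0$ and let $k\to\infty$ in (i); monotone convergence again gives $\int_0^TL\ge\mathcal C(r_0)$. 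The bound $\mathcal C(r_0)/\bar L$ follows from $\int_0^tL\le\bar Lt$. When $\mathcal C(r_0)=\infty$ the same inequalities show that arrival is impossible under a finite budget.

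For (iii), the first step is to show that $f$ is Carath\'eodory. Measurability in $t$ is inherited from $L$. Continuity in $z$ away from the origin is clear, and at the origin it holds because $\|f\|=L\,\omega(\|z\|)\to0$ when $\omega(0)=0$. Next, fix the candidate radial solution $z(t)=\rho(t)z_0/r_0$, where $\rho$ solves $\Phi(\rho(t))=\int_0^tL$. This is well defined by inverting the strictly increasing, continuous map $\Phi:(0,r_0]\to[0,\mathcal C(r_0))$ for as long as $\int_0^tL<\mathcal C(r_0)$. Set $\rho=0$ from the first time $t^\star$ at which the budget reaches $\mathcal C(r_0)$, if such a time exists.

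The main obstacle is checking that this $\rho$ is AC across $t^\star$. On $[0,t^\star)$ we have $\dot\rho=-L\,\omega(\rho)$ a.e., because $\Phi^{-1}$ is $C^1$ on compact subintervals. The derivative is bounded by $L\sup_{[0,r_0]}\omega$, which is integrable, so $\rho$ is AC up to and through $t^\star$ with $\rho(t^\star)=0$. After $t^\star$ the zero function solves $\dot z=f(t,z)$ because $f(t,0)=0$, so arrival happens exactly when the budget first equals $\mathcal C(r_0)$.

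The invariance of the origin needs a separate argument, since solutions need not be unique. Any solution $w$ of $\dot w=f(t,w)$ has $\tfrac{d}{dt}\|w\|=-L\,\omega(\|w\|)\le0$ on $\{w\ne0\}$. Suppose $w(t_1)=0$ and $w(t_2)\ne0$ for some $t_2>t_1$. Take the last zero before $t_2$; on the interval from it to $t_2$ the norm $\|w\|$ is positive and nonincreasing, which contradicts its starting value $0$. Hence the origin is invariant and arrival is settling.
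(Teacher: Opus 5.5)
Your proposal is correct and follows the paper's proof essentially step for step. Part (i) uses the chain rule for the $C^1$ primitive $\int_r^{r_0}du/\omega$ on the compact range of $r$, part (ii) uses a sequence with $r(t_k)\to0$, and part (iii) uses the implicitly defined radial solution with the zero extension. Two small remarks: $\Phi$ is strictly \emph{decreasing} on $(0,r_0]$, which is harmless for invertibility, and your last-zero monotonicity argument genuinely proves that the origin is invariant for every solution, whereas the paper asserts this and only shows that the zero extension is one solution.
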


\begin{proof}
(i) On any $[0,\tau']$ with $\tau'<t_{\mathrm{hit}}$, $r$ is AC (reverse triangle inequality) with values in a compact subset of $(0,\infty)$, on which $1/\omega$ is continuous and bounded; so, by the chain rule for the $C^1$ function $\Psi(\rho)=\int_\rho^{r_0}du/\omega(u)$ composed with the AC function $r$,
\[
\frac{d}{dt}\int_{r(t)}^{r_0}\frac{du}{\omega(u)}\;=\;-\frac{\dot r(t)}{\omega(r(t))}\;\le\;\frac{\|\dot z(t)\|}{\omega(r(t))}\;\le\;L(t)
\]
a.e., using $\dot r\ge-\|\dot z\|\ge-L\,\omega(r)$; integration gives \eqref{eq:osgoodbound}. No monotonicity of $r$ is required. (ii) Pick $t_k$ with $r(t_k)\to0$, taking $t_k\uparrow t_{\mathrm{hit}}$ if arrival occurs; the left side of \eqref{eq:osgoodbound} increases to $\mathcal C(r_0)$, the right side never exceeds $\int_0^TL$, and \eqref{eq:hitfloor} follows since $\int_0^{t_{\mathrm{hit}}}L\ge\mathcal C(r_0)$. (iii) Continuity of the zero extension at the origin follows from $\|f(t,z)\|\le L(t)\,\omega(\|z\|)\to0$ as $z\to0$, so the field is Carath\'eodory. For it, $r$ satisfies $\dot r=-L(t)\omega(r)$ while $r>0$; the AC solution defined implicitly by $\int_{r(t)}^{r_0}du/\omega=\int_0^tL$ realizes it, reaches $0$ exactly when the budget reaches $\mathcal C(r_0)$, and the zero extension solves the equation thereafter.
\end{proof}

\begin{corollary}\label{cor:taxonomy}
The three convergence families of the literature are the regions of one integral.
(i) \emph{Impossibility beyond linear growth:} whenever $\int_{0^+}du/\omega=\infty$, no finite budget permits arrival or terminal convergence; the linear modulus is one such case, for which Theorem \ref{thm:osgood}(ii) reduces to Theorem \ref{thm:necessity}, and moduli violating linear growth can be divergent too, e.g.\ $\omega(u)=u\log(e/u)$ near zero; when $\mathcal C(r_0)<\infty$ it is the exact threshold, attained in Theorem \ref{thm:osgood}(iii).
(ii) \emph{Finite time:} for $\omega(u)=u^{\alpha}$, $\alpha<1$, and $L\equiv\bar L$, \eqref{eq:osgoodbound} integrates in closed form to $r(t)^{1-\alpha}\ge r_0^{1-\alpha}-\bar L(1-\alpha)t$, hence
\[
t_{\mathrm{hit}}\ \ge\ \frac{r_0^{1-\alpha}}{\bar L(1-\alpha)},
\]
with equality for the radial field $\dot z=-\bar L\|z\|^{\alpha-1}z$. The right side is the classical settling expression of finite-time control \cite{Haimo1986,BhatBernstein2000}, exact for the scalar design; the statement here is the reversed direction and quantifier: it is the minimum arrival time permitted by the speed constraint, for every admissible trajectory, regardless of feedback formula or dimension.
(iii) \emph{Fixed time:} a uniform arrival bound over all initial conditions is possible in the class exactly when $\int_0^{\infty}du/\omega<\infty$, since $\sup_{z_0\neq0}t_{\mathrm{hit}}(z_0)\ge\frac1{\bar L}\int_0^\infty\frac{du}{\omega(u)}$ by (ii) of Theorem \ref{thm:osgood}, and the radial field attains the bound. For the two-power modulus $\omega(u)=au^{p}+bu^{q}$ with $a,b>0$ and $0<p<q$, the integral converges at both ends precisely when $p<1<q$, recovering the structure of the fixed-time condition of \cite{Polyakov2012}. As a concrete instance, $\omega(u)=\sqrt u\,(1+u)$ gives $\mathcal C(r)=2\arctan\sqrt r\le\pi$, so no admissible trajectory arrives from any $z_0$ before $2\arctan\!\sqrt{\|z_0\|}\,/\bar L$, and none uniformly before $\pi/\bar L$.
\end{corollary}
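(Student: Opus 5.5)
The plan is to derive each of the three items directly from Theorem~\ref{thm:osgood}, with explicit calculus for the concrete moduli. For (i), suppose $\int_{0^+}du/\omega=\infty$, so $\mathcal C(r_0)=\infty$ for every $r_0>0$. If some admissible trajectory had $\inf_t r(t)=0$ on $[0,T)$, Theorem~\ref{thm:osgood}(ii) would force $\int_0^TL\ge\mathcal C(r_0)=\infty$. So under a finite budget neither arrival nor terminal convergence can occur. For the linear modulus $\omega(u)=u$, the bound \eqref{eq:osgoodbound} reads $\log(r_0/r(t))\le\int_0^tL$, which is exactly Lemma~\ref{lem:gronwall}; this gives the reduction to Theorem~\ref{thm:necessity}. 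For the example $\omega(u)=u\log(e/u)$, I substitute $v=\log(e/u)$, which gives $\int du/(u\log(e/u))=-\log\log(e/u)$. This diverges as $u\to0^+$, while $\omega(u)/u\to\infty$, so linear growth fails. Exactness when $\mathcal C(r_0)<\infty$ is just a citation of Theorem~\ref{thm:osgood}(iii).

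For (ii), I would take $\omega(u)=u^\alpha$ and $L\equiv\bar L$ and evaluate the left side of \eqref{eq:osgoodbound} as $(r_0^{1-\alpha}-r(t)^{1-\alpha})/(1-\alpha)\le\bar Lt$, which rearranges to the stated inequality. Letting $t\uparrow t_{\mathrm{hit}}$, or applying \eqref{eq:hitfloor} with $\mathcal C(r_0)=r_0^{1-\alpha}/(1-\alpha)$, gives the arrival floor. Equality for $\dot z=-\bar L\|z\|^{\alpha-1}z$ follows from Theorem~\ref{thm:osgood}(iii), whose hypotheses hold since $u^\alpha$ is continuous on $[0,\infty)$ and vanishes only at $0$. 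Its explicit solution is $r(t)^{1-\alpha}=r_0^{1-\alpha}-\bar L(1-\alpha)t$.

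For (iii), the necessity direction rests on $\mathcal C(r_0)$ increasing to $\int_0^\infty du/\omega$ as $r_0\to\infty$, by monotone convergence. By \eqref{eq:hitfloor}, $\sup_{z_0}t_{\mathrm{hit}}\ge\sup_{r_0}\mathcal C(r_0)/\bar L$, which is infinite if the integral diverges, so no uniform bound exists. Conversely, if the integral converges, the radial field of Theorem~\ref{thm:osgood}(iii) with $L\equiv\bar L$ arrives at time $\mathcal C(r_0)/\bar L\le\bar L^{-1}\int_0^\infty du/\omega$, which is a uniform bound that attains the supremum.

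For the two-power modulus $\omega(u)=au^p+bu^q$, the plan is to split the integral at $u=1$. Near $0$ we have $\omega\sim au^p$, since $p<q$, so that end converges iff $p<1$. Near $\infty$ we have $\omega\sim bu^q$, so that end converges iff $q>1$. Finally, for $\omega(u)=\sqrt u(1+u)$, substituting $u=s^2$ gives $\int_0^r du/(\sqrt u(1+u))=2\int_0^{\sqrt r}ds/(1+s^2)=2\arctan\sqrt r$, which tends to $\pi$. Inserting this into the two floors above gives the stated times.

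The main obstacle is bookkeeping the quantifiers in (iii). The uniform lower bound is a statement about a supremum over $z_0$, and it uses the constant envelope $\bar L$ rather than a general $L$. I would state that restriction explicitly, so that \eqref{eq:hitfloor} applies with $\int_0^tL=\bar Lt$.
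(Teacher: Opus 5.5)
Your proposal is correct and takes the paper's own route: every item is read off Theorem~\ref{thm:osgood}(ii)--(iii), together with the elementary integrals you compute, which is the justification the statement carries inline (the paper gives no separate proof). One small caveat: your claim that $u^\alpha$ is continuous on $[0,\infty)$ and vanishes at $0$ holds only for $\alpha\in(0,1)$, so for $\alpha\le 0$ (still inside the literal range $\alpha<1$) the equality case should rest on the explicit solution $r(t)^{1-\alpha}=r_0^{1-\alpha}-\bar L(1-\alpha)t$ you also wrote down, rather than on Theorem~\ref{thm:osgood}(iii).
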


\begin{corollary}[Uniform arrival]\label{cor:uniform}
Fix $\bar L>0$, with $\omega$ as in Theorem \ref{thm:osgood}(iii), and optimize over all loops of the class and all envelopes $0\le L(t)\le\bar L$ on a horizon containing the arrival times. A bound $T_u<\infty$ with $t_{\mathrm{hit}}(z_0)\le T_u$ for \emph{all} $z_0\neq0$ is achievable if and only if $\int_0^\infty du/\omega(u)<\infty$, and the least achievable $T_u$ equals $\frac1{\bar L}\int_0^\infty du/\omega(u)$: every admissible pair obeys $t_{\mathrm{hit}}(z_0)\ge\mathcal C(\|z_0\|)/\bar L$ with $\sup_{r}\mathcal C(r)=\int_0^\infty du/\omega$, and the radial field with $L\equiv\bar L$ attains these times exactly.
\end{corollary}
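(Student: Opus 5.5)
The argument has two halves: a lower bound valid for every admissible pair, and its attainment by the radial field of Theorem~\ref{thm:osgood}(iii). I would first record the elementary facts about $\mathcal C$. Since $\omega>0$ is continuous on $(0,\infty)$, $\mathcal C$ is nondecreasing, in fact strictly increasing wherever it is finite. Moreover $\sup_{r>0}\mathcal C(r)=\lim_{r\to\infty}\mathcal C(r)=\int_0^\infty du/\omega$ by monotone convergence. This identity also holds when the integral diverges at the origin ($\mathcal C\equiv\infty$) or at infinity.

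For the lower bound, fix any loop of the class with envelope $0\le L\le\bar L$ and any $z_0\ne0$ whose trajectory arrives at $t_{\mathrm{hit}}<\infty$ within the horizon. Theorem~\ref{thm:osgood}(ii) applies on $[0,t_{\mathrm{hit}}]$ and gives $t_{\mathrm{hit}}(z_0)\ge\mathcal C(\|z_0\|)/\bar L$. If $\mathcal C(\|z_0\|)=\infty$, the trajectory never arrives, so $t_{\mathrm{hit}}=\infty$. Taking the supremum over $z_0$, using that $\|z_0\|$ ranges over all of $(0,\infty)$, any uniform bound satisfies
\[
T_u\ \ge\ \sup_{z_0\ne0}t_{\mathrm{hit}}(z_0)\ \ge\ \frac1{\bar L}\int_0^\infty\frac{du}{\omega(u)}.
\]
Hence divergence of the integral rules out any finite $T_u$, which is the ``only if'' direction.

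For achievability, assume $\int_0^\infty du/\omega<\infty$ and take $L\equiv\bar L$ with the inward radial field. By Theorem~\ref{thm:osgood}(iii) the budget $\bar L t$ first equals $\mathcal C(r_0)$ at $t=\mathcal C(r_0)/\bar L$, so $t_{\mathrm{hit}}(z_0)=\mathcal C(\|z_0\|)/\bar L$ exactly, and the origin is invariant afterward. Since $\mathcal C(r_0)<\int_0^\infty du/\omega$ for every finite $r_0$ (strictly, because $1/\omega>0$), all arrival times are bounded by $T_u^\star:=\frac1{\bar L}\int_0^\infty du/\omega$. This bound is approached as $\|z_0\|\to\infty$, so it is the least uniform bound: the supremum of arrival times equals it, although it is not attained by any single $z_0$.

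The main obstacle is bookkeeping about the horizon. Theorem~\ref{thm:osgood} is stated on $[0,T)$, so I would take the horizon $T\ge T_u^\star$, or $T=\infty$ with $L\in L^1_{\mathrm{loc}}$, and check that part (ii) applies with $T$ replaced by any finite time beyond $t_{\mathrm{hit}}$. I would also note that ``least achievable'' is meant as an infimum over admissible pairs, equal to $T_u^\star$ and attained by the radial design with $L\equiv\bar L$.
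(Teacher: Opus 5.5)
Your proposal is correct and follows the paper's own argument. The floor $t_{\mathrm{hit}}(z_0)\ge\mathcal C(\|z_0\|)/\bar L$ comes from Theorem~\ref{thm:osgood}(ii) via $\int_0^{t_{\mathrm{hit}}}L\le\bar L\,t_{\mathrm{hit}}$, the monotone identity $\sup_r\mathcal C(r)=\int_0^\infty du/\omega$ gives the uniform bound, and the radial field with $L\equiv\bar L$ attains these times exactly by Theorem~\ref{thm:osgood}(iii). Your extra remarks are correct refinements that the paper leaves implicit: the supremum $\frac1{\bar L}\int_0^\infty du/\omega$ is approached only as $\|z_0\|\to\infty$ yet is still an admissible uniform bound, and passing to the limit $t\uparrow t_{\mathrm{hit}}$ in part (i) removes any dependence on the horizon.
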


\begin{remark}[Relation to Osgood's uniqueness criterion]\label{rem:credit}
The integral $\int du/\omega$ is classical: divergence at the origin is Osgood's uniqueness criterion \cite{Osgood1898}; for a single scalar autonomous system the arrival time \emph{equals} the integral, a classical fact \cite{Haimo1986}; such integrals serve as sufficiency tools with settling upper estimates throughout finite-time theory \cite{MoulayPerruquetti2008,BhatBernstein2000}; exact settling integrals and least \emph{upper} bounds for given families are available, e.g.\ \cite{Aldana2019}; and the fixed-time constant of \cite{Polyakov2012} is the two-ended evaluation. What this section uses is the reversed direction and quantifier: the integral as a first-arrival floor for every trajectory of every closed loop in the modulus class, in every dimension, under time-varying budgets, with exact attainment inside the class; to the best of our knowledge, lower bounds of this class-wide kind are not stated in that literature.
\end{remark}

\begin{remark}[The budget clock]\label{rem:clock}
If $L>0$ a.e., the clock $\tau=\int_0^tL$ turns \eqref{eq:omegaenv} into $\|dz/d\tau\|\le\omega(\|z\|)$: measuring time by accumulated gain removes the envelope, and $\mathcal C(r_0)$ is the minimal $\tau$-time to the origin, which is why the cost is gain-free; intervals with $L=0$ force $\dot z=0$ and may be excised.
\end{remark}

\section{Linear time-varying loops}\label{sec:ltv}

For $\dot z=A(t)z$ the envelope $\|A(t)\|$ can be loose: the rotation $A=\omega J$, with $J$ skew-symmetric, has $\|A\|_2=\omega$ while $\|z(t)\|$ stays constant; the right one-sided rates are logarithmic norms. The next estimate is classical \cite{Coppel1965,Strom1975,Soderlind2006,Vrabel2020} and appears as Lemma 1 of \cite{Shakouri2022}; we include a proof to fix hypotheses.

\begin{lemma}\label{lem:mu}
Let $A\in L^1_{\mathrm{loc}}([0,T);\R^{n\times n})$ and let $z$ be an AC solution of $\dot z=A(t)z$ with $z_0\neq0$. Then for $t\in[0,T)$ and any $p$,
\begin{equation}\label{eq:mu2sided}
\|z_0\|_p\,e^{-\int_0^t\mu_p(-A(s))ds}\ \le\ \|z(t)\|_p\ \le\ \|z_0\|_p\,e^{\int_0^t\mu_p(A(s))ds}.
\end{equation}
\end{lemma}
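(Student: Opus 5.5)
The plan is to copy the argument of Lemma~\ref{lem:gronwall}, replacing the crude estimate $|\frac{d}{dt}\|z\||\le\|\dot z\|$ by the one-sided estimate that the logarithmic norm supplies. First, $A\in L^1_{\mathrm{loc}}$ makes $t\mapsto\mu_p(\pm A(t))$ measurable and locally integrable, since $|\mu_p(M)|\le\|M\|_p$. Next, the envelope $L=\|A(\cdot)\|_p$ is locally integrable, so Lemma~\ref{lem:gronwall}, applied in the $p$-norm (the paper notes the floors hold in every norm), gives $z(t)\ne0$ on $[0,T)$. Hence $\varphi(t)=\log\|z(t)\|_p$ is AC on each $[0,\tau]$ with $\tau<T$, because $\|z\|_p$ is AC and bounded away from zero there.

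The key step is a pointwise derivative bound at a.e.\ $t$ where $z$ is differentiable with $\dot z(t)=A(t)z(t)$ and $\|z\|_p$ is differentiable. For small $h>0$,
\[
z(t+h)=(I+hA(t))z(t)+o(h),
\]
so
\[
\|z(t+h)\|_p\le\|I+hA(t)\|_p\|z(t)\|_p+o(h).
\]
Subtracting $\|z(t)\|_p$, dividing by $h$ and letting $h\to0^+$ gives
\[
\tfrac{d}{dt}\|z(t)\|_p\le\mu_p(A(t))\|z(t)\|_p .
\]
For the lower bound, run the same expansion backward in time. Write
\[
z(t-h)=(I-hA(t))z(t)+o(h),
\]
which is valid at differentiability points by the left derivative. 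This gives
\[
\|z(t-h)\|_p\le\|I+h(-A(t))\|_p\|z(t)\|_p+o(h),
\]
and therefore
\[
-\tfrac{d}{dt}\|z(t)\|_p\le\mu_p(-A(t))\|z(t)\|_p .
\]
Dividing both bounds by $\|z\|_p>0$ yields
\[
-\mu_p(-A)\le\dot\varphi\le\mu_p(A)\quad\text{a.e.}
\]

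Integrating $\dot\varphi$ over $[0,t]$ and exponentiating gives \eqref{eq:mu2sided} for every $t<T$.

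The main obstacle is measure-theoretic bookkeeping. The equation $\dot z=A(t)z$ holds only a.e., and $A$ is merely integrable. The first-order expansion $z(t\pm h)=z(t)\pm h\dot z(t)+o(h)$ therefore holds only at points of differentiability of $z$. That is all we need, because the two-sided derivative of $\|z\|_p$ exists a.e. The only other point to check is that the limit defining $\mu_p$ exists. It does, because $h\mapsto(\|I+hM\|_p-1)/h$ is monotone by convexity of the norm, and this limit is what the one-sided quotient bounds converge to.
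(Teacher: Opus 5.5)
Your proof is correct and is essentially the paper's: the upper bound comes from the same forward expansion $z(t+h)=(I+hA(t))z(t)+o(h)$ once Lemma~\ref{lem:gronwall} rules out zeros, and your backward expansion $z(t-h)=(I-hA(t))z(t)+o(h)$ is the pointwise content of the paper's time reversal $w(r)=z(t-r)$, $\dot w=-A(t-r)w$, which the paper uses to get the lower bound by reapplying the upper one. One small repair: measurability of $t\mapsto\mu_p(\pm A(t))$ follows from the Lipschitz continuity of $\mu_p$ in its argument, not from $|\mu_p(M)|\le\|M\|_p$, which only supplies local integrability.
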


\begin{proof}
$t\mapsto\mu_p(\pm A(t))$ is measurable, as a Lipschitz function of $A(t)$, and locally integrable since $|\mu_p(\pm A)|\le\|A\|_p$. Upper bound: $\|z\|_p>0$ everywhere by Lemma \ref{lem:gronwall} with $L=\|A(\cdot)\|_p$; at a.e.\ $r$, $z(r+h)=(I+hA(r))z(r)+o(h)$, so the AC function $\|z\|_p$ has a.e.\ derivative at most $\mu_p(A(r))\|z(r)\|_p$; integrate $\log\|z\|_p$. Lower bound: fix $t$, set $w(r)=z(t-r)$, note $\dot w(r)=-A(t-r)w(r)$, and apply the upper bound to $w$.
\end{proof}

\begin{theorem}\label{thm:ltv}
Let $A\in L^1_{\mathrm{loc}}([0,T);\R^{n\times n})$. If $\dot z=A(t)z$ is PT attractive at $T$, then for every $p$,
\begin{equation}\label{eq:muinf}
\int_0^t\mu_p(-A(s))\,ds\ \longrightarrow\ +\infty\qquad(t\to T^-).
\end{equation}
Conversely, $\int_0^t\mu_p(A(s))ds\to-\infty$ for some $p$ is sufficient, as in \cite{Shakouri2022}.
\end{theorem}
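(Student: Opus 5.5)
Both directions come straight from the two-sided estimate of Lemma \ref{lem:mu}; the only real work is in the necessity part. Write $M(t)=\int_0^t\mu_p(-A(s))\,ds$, which is finite for each $t<T$ because $|\mu_p(-A)|\le\|A\|_p$ is locally integrable.

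For necessity I would argue by contraposition. Suppose \eqref{eq:muinf} fails for some $p$. Then $\limsup_{t\to T^-}$ of the quantity $\exp(-M(t))$ is positive. The obstacle is that $M$ need not be monotone, since $\mu_p(-A)$ can take either sign. So failure of \eqref{eq:muinf} means $\liminf_{t\to T^-}M(t)<\infty$, not that $M$ stays bounded. This is enough: there are times $t_k\uparrow T$ with $M(t_k)\le c<\infty$.

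Now pick any $z_0\neq0$. The lower bound in \eqref{eq:mu2sided} gives
\[
\|z(t_k)\|_p\ \ge\ \|z_0\|_p\,e^{-M(t_k)}\ \ge\ \|z_0\|_p\,e^{-c}\ >\ 0 .
\]
Hence $z(t)\not\to0$ as $t\to T^-$, so this solution is not exactly PT-regulated. That contradicts PT attractivity. Since $p$ was arbitrary, \eqref{eq:muinf} holds for every $p$. No entrywise-limit hypothesis is used; only local integrability of $A$ enters, through Lemma \ref{lem:mu}. I would add the remark that one $z_0$ suffices, so the conclusion even follows from attractivity of a single nonzero solution.

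For sufficiency, suppose $\int_0^t\mu_p(A(s))\,ds\to-\infty$ for some $p$. The upper bound in \eqref{eq:mu2sided} gives $\|z(t)\|_p\le\|z_0\|_p\,e^{\int_0^t\mu_p(A)}\to0$ for every solution. Convergence in $\|\cdot\|_p$ is equivalent to convergence in the Euclidean norm by equivalence of norms, so the system is PT attractive; this is the direction recorded in \cite{Shakouri2022}.

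The one point needing care is the non-monotonicity of $M$ in the necessity step, handled by working along a subsequence. As a consistency check, $\mu_p(-A)\le\|A\|_p$, so \eqref{eq:muinf} implies $\int_0^T\|A\|_p=\infty$, which agrees with Theorem \ref{thm:necessity}(i).
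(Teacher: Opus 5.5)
Your proposal is correct and matches the paper's proof: contraposition along times $t_k\to T^-$ on which $\int_0^{t_k}\mu_p(-A)\le c$, then the left bound of Lemma~\ref{lem:mu}, with sufficiency read off from the right bound. Your extra observation, that a single nonzero solution converging to the origin already forces \eqref{eq:muinf}, is a valid slight strengthening, since the lower bound holds with the same integral for every $z_0\neq0$.
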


\begin{proof}
If \eqref{eq:muinf} fails there are $t_k\to T^-$ with $\int_0^{t_k}\mu_p(-A)\le c<\infty$, and the left bound in \eqref{eq:mu2sided} gives $\|z(t_k)\|_p\ge\|z_0\|_pe^{-c}$, so $z(t)\not\to0$. Sufficiency is the right bound in \eqref{eq:mu2sided}.
\end{proof}

\begin{corollary}\label{cor:pointwise}
Under the hypotheses of Theorem \ref{thm:ltv}, PT attractivity implies that for every $p$ and every $t_0<T$ the essential supremum of $\mu_p(-A(t))$ over $(t_0,T)$ is infinite, and hence so is that of $\|A(t)\|_p$: a bounded function on a finite interval has finite integral.
\end{corollary}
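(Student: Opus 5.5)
The plan is to argue by contraposition from Theorem~\ref{thm:ltv}. Fix $p$ and $t_0<T$. Suppose the essential supremum of $\mu_p(-A(t))$ over $(t_0,T)$ were finite, say $\mu_p(-A(t))\le M$ for a.e.\ $t\in(t_0,T)$.

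The first step bounds the integral in \eqref{eq:muinf} from above. For $t\in(t_0,T)$, split
\[
\int_0^t\mu_p(-A(s))\,ds=\int_0^{t_0}\mu_p(-A(s))\,ds+\int_{t_0}^t\mu_p(-A(s))\,ds .
\]
The first term is a finite constant $c_0$. Indeed, $\mu_p(-A(\cdot))$ is measurable and locally integrable on $[0,T)$, as noted in the proof of Lemma~\ref{lem:mu}, via $|\mu_p(-A)|\le\|A\|_p$ and $A\in L^1_{\mathrm{loc}}$. For the second term, a one-sided bound suffices: the integrand is at most $M$ a.e., so the term is at most $M(t-t_0)\le M^+ T$. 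This holds even though the integrand may be very negative; its integral is finite on $[t_0,t]$ by local integrability.

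Together these give $\int_0^t\mu_p(-A)\le c_0+M^+T$ for all $t\in(t_0,T)$. This contradicts \eqref{eq:muinf}, which PT attractivity forces by Theorem~\ref{thm:ltv}. Hence the essential supremum is $+\infty$.

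For the statement about $\|A(t)\|_p$, use the inequality $\mu_p(-A(t))\le|\mu_p(-A(t))|\le\|A(t)\|_p$ recalled in the Notation. It gives $\operatorname{ess\,sup}_{(t_0,T)}\|A\|_p\ge\operatorname{ess\,sup}_{(t_0,T)}\mu_p(-A)=\infty$.

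The only delicate point is the claim that ``a bounded function on a finite interval has finite integral.'' Here only an upper bound on $\mu_p(-A)$ is available, not a two-sided one, so the argument must bound the integral from above rather than claim it is finite. The lower side is handled by local integrability on each $[0,t]$, $t<T$, which keeps every integral well defined. No uniformity as $t\to T^-$ is needed, since the upper bound $c_0+M^+T$ is itself uniform in $t$.
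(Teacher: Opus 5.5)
Your proof is correct and follows the paper's route: contraposition against the divergence \eqref{eq:muinf} of Theorem~\ref{thm:ltv}, then $\mu_p(-A)\le\|A\|_p$ to carry the conclusion over to the norm. Your point that a finite essential supremum bounds $\mu_p(-A)$ only from above, so that one needs an upper bound on the integral uniform in $t$, is a correct tightening of the paper's one-line justification ``a bounded function on a finite interval has finite integral.''
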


\begin{proposition}[Characterization of scalar and diagonal cases]\label{prop:scalar}
For $\dot x=a(t)x$ with $a\in L^1_{\mathrm{loc}}([0,T))$: the system is PT attractive at $T$ if and only if $\int_0^t a(s)ds\to-\infty$ as $t\to T^-$. For $A(t)=\mathrm{diag}(a_1,\dots,a_n)$: PT attractivity holds if and only if $\int_0^t a_i\to-\infty$ for every $i$. \emph{Proof:} $x(t)=x_0e^{\int_0^ta}$; the diagonal case decouples into $n$ scalar systems. In the scalar case the two conditions of Theorem \ref{thm:ltv} coincide, since $\mu(a)=a=-\mu(-a)$; the diagonal characterization is componentwise, not a coincidence of the matrix conditions, which involve $\max_i$ and $\min_i$ of the entries.
\end{proposition}

\begin{remark}[Relation to \cite{Shakouri2022}]\label{rem:shakouri}
The communiqu\'e proved, under the hypothesis that every entry of $A(t)$ has a possibly infinite limit as $t\to T^-$, the pointwise conclusions $\mu_p(-A(t))\to\infty$ and $\|A(t)\|_p\to\infty$, and its Remark 1 shows the hypothesis cannot simply be dropped from the limit form. Theorem \ref{thm:ltv} and Corollary \ref{cor:pointwise} follow directly from the same Lemma 1 of that paper and are elementary consequences of it; their value is the \emph{form}: the integral condition, and the essential-supremum conclusion, hold with no hypothesis beyond local integrability, survive the oscillating example of \cite{Shakouri2022}, and decide cases the pointwise test cannot, e.g.\ $\|A(t)\|=C(T-t)^{-0.6}$, which diverges pointwise yet has $\int_0^T\!C(T-s)^{-0.6}ds=\tfrac{C}{0.4}T^{0.4}<\infty$ and is thus excluded. The communiqu\'e also contains a sufficient condition and an exact, non-constructive characterization (its Theorem 2); the contribution here is not a missing direction there but the class-level theory of Sections \ref{sec:main} and \ref{sec:osgood}: what any closed loop, linear or not, can achieve under a gain constraint. Under its hypotheses the limit conclusion of \cite{Shakouri2022} is stronger than our essential supremum, Proposition \ref{prop:scalar} closes the scalar and diagonal cases, and for non-normal $A(t)$ a gap remains; closing it with a verifiable criterion is open.
\end{remark}

\begin{proposition}[Exact terminal error of gain freezing]\label{prop:frozen}
Let $A\in L^1_{\mathrm{loc}}([0,T];\R^{n\times n})$ follow any profile on $[0,t_s]$ and freeze thereafter, $A(t)=A_s:=A(t_s)$ for $t\in[t_s,T]$, and let $\Phi$ denote the transition matrix. Then $z(T)=Mz_0$ with
\[
M\;=\;e^{A_s(T-t_s)}\,\Phi(t_s,0),
\]
so in the Euclidean norm the terminal ratios $\|z(T)\|/\|z_0\|$ over $z_0\neq0$ fill exactly the interval $[\sigma_{\min}(M),\sigma_{\max}(M)]$, and
\[
\sigma_{\min}(M)\ \ge\ \exp\Big[-\!\int_0^{t_s}\!\|A(t)\|\,dt-(T-t_s)\|A_s\|\Big],
\]
so the budget floor of Theorem \ref{thm:necessity} is the universal bound beneath the exact spectrum.
\end{proposition}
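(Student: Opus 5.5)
The plan is to compute the transition matrix over the two phases, read off the range of ratios from the singular value decomposition, and then bound $\sigma_{\min}$ through Lemma~\ref{lem:gronwall}.

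\textbf{Transition matrix.} Since $A\in L^1([0,T])$, every AC solution of $\dot z=A(t)z$ is $z(t)=\Phi(t,0)z_0$ and extends to $t=T$. The cocycle property gives $\Phi(T,0)=\Phi(T,t_s)\Phi(t_s,0)$. On $[t_s,T]$ the coefficient is the constant $A_s$, so uniqueness of solutions for constant-coefficient systems gives $\Phi(T,t_s)=e^{A_s(T-t_s)}$. Hence $z(T)=Mz_0$ with $M$ as stated.

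\textbf{Range of ratios.} The set of values $\|Mz_0\|/\|z_0\|$ over $z_0\ne0$ is the image of the unit sphere under the continuous map $x\mapsto\|Mx\|$. For $n\ge2$ the sphere is connected, so this image is an interval. By the SVD, its minimum is $\sigma_{\min}(M)$ and its maximum is $\sigma_{\max}(M)$, attained at the corresponding right singular vectors. The interval is therefore exactly $[\sigma_{\min}(M),\sigma_{\max}(M)]$. For $n=1$ the two endpoints coincide and the claim is trivial.

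\textbf{Lower bound on $\sigma_{\min}$.} Use the envelope $L(t)=\|A(t)\|$. This equals $\|A(t)\|$ on $[0,t_s]$ and $\|A_s\|$ on $(t_s,T]$, so it lies in $L^1(0,T)$. Apply Lemma~\ref{lem:gronwall} on $[0,T]$ to the solution from a right singular vector realizing $\sigma_{\min}$. This gives
\[
\|z(T)\|\ \ge\ \|z_0\|\exp\Big[-\!\int_0^{t_s}\|A\|-(T-t_s)\|A_s\|\Big],
\]
which is the claimed bound. Equivalently, one can combine $\sigma_{\min}(XY)\ge\sigma_{\min}(X)\sigma_{\min}(Y)$ with $\sigma_{\min}(e^{A_s\tau})\ge e^{-\tau\|A_s\|}$, since the inverse of $e^{A_s\tau}$ is $e^{-A_s\tau}$ with norm at most $e^{\tau\|A_s\|}$.

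\textbf{Main subtlety.} The only delicate point is the hypothesis $A\in L^1_{\mathrm{loc}}([0,T])$ together with the requirement that $A(t_s)$ be meaningful. The closed interval makes $A$ integrable up to $T$, and the frozen value is simply a specified matrix $A_s$. With these conventions, invertibility of $\Phi$ and the applicability of Lemma~\ref{lem:gronwall} up to $\tau=T$ are immediate.
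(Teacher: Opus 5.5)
Your proof is correct and follows the paper's structure: composition of transition matrices, the standard singular-value description of the ratio set, and an exponential lower bound on $\sigma_{\min}(M)$. The only variation is that your primary bound applies Lemma~\ref{lem:gronwall} with $L=\|A(\cdot)\|$ along the minimizing right singular vector, whereas the paper writes $\sigma_{\min}(M)=1/\|M^{-1}\|$ and bounds $\|\Phi(t_s,0)^{-1}\|\,\|e^{-A_s(T-t_s)}\|$ directly. That is exactly your ``equivalently'' remark, once the $\Phi(t_s,0)$ factor is also bounded below by $e^{-\int_0^{t_s}\|A\|}$, which your alternative leaves implicit.
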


\begin{proof}
Composition of transition matrices gives $M$; the singular-value description of the ratio set is standard. For the bound, $\sigma_{\min}(M)=1/\|M^{-1}\|$ and $\|M^{-1}\|\le\|\Phi(t_s,0)^{-1}\|\,\|e^{-A_s(T-t_s)}\|\le e^{\int_0^{t_s}\|A\|+(T-t_s)\|A_s\|}$ by the standard exponential bounds.
\end{proof}

\begin{remark}
The exact spectrum $[\sigma_{\min}(M),\sigma_{\max}(M)]$ is sharper than any envelope bound for the specified system; the floor is what survives for all systems and profiles. State-triggered freezing, with $t_s$ depending on $z_0$, needs a per-trajectory treatment, since $M$ then varies with the initial condition.
\end{remark}

\begin{remark}[The price of the freeze profile, at the envelope level]\label{rem:freezeprice}
Following the singular envelope $c/(T-t)$ until $t_s$ and freezing at the peak $\bar L=c/(T-t_s)$ spends $\int_0^TL=c\big(1+\log\frac{\bar LT}{c}\big)$, so the terminal ratio cannot go below $e^{-c}\,(c/\bar LT)^{c}$, polynomial in $1/\bar L$; a constant envelope of equal peak has floor $e^{-\bar LT}$, exponentially smaller. The singular segment buys uniformity of the transient, not terminal accuracy.
\end{remark}

\begin{remark}[Interpreting bounded-gain prescribed time]\label{rem:claims}
A bounded closed-loop envelope, $L\le\bar L$ with budget $\mathcal B\le\bar LT$, excludes exact arrival or terminal convergence by $T$ and bounds the achievable accuracy by the terminal ratio $e^{-\mathcal B}$; it guarantees nothing more by itself, and in particular no contraction, as $\dot z=z$ with envelope $L\equiv1$ shows, so practical regulation requires a separate performance guarantee on top of this limitation. Restricting the initial set does not remove the floor for the nonzero initial states to which the envelope applies. Finally, bounded controller coefficients need not produce a bounded closed-loop envelope, so claims of bounded-gain prescribed-time behavior, e.g.\ \cite{SongSu2023}, are to be read against the envelope the closed loop actually admits.
\end{remark}

\section{Numerical illustration}\label{sec:num}

Two experiments with $T=1$. First, the double integrator under $u=-k(t)x-c(t)\dot x$, $k=6/(T-t)^2$, $c=4/(T-t)$: the rest-to-rest solution $x(t)=3(T-t)^2-2(T-t)^3$ reaches the origin with $\dot x$ at $T$, consistent with Theorem \ref{thm:char} since this envelope is non-integrable. Saturating the gains, $T-t\mapsto\max(T-t,1/\nu)$ with $\nu\in\{1,1.5,2,3,4,5,6,8\}$, makes two budgets finite, and they must be distinguished: the speed-envelope budget $\mathcal B_L=\int_0^T\|A\|_2\,dt$ of Theorems \ref{thm:necessity} and \ref{thm:frontier}, and the logarithmic-norm budget $\mathcal B_\mu=\int_0^T\mu_2(-A)\,dt\le\mathcal B_L$, which bounds the radial rate rather than the speed and gives, by Lemma \ref{lem:mu}, the sharper floor $e^{-\mathcal B_\mu}\ge e^{-\mathcal B_L}$. Panel (a) plots against $\mathcal B_\mu$, which spans $[5.2,51.3]$; the achieved contraction spans $[0.605,0.085]$, above the floor $e^{-\mathcal B_\mu}$ at every $\nu$, and the radial design $\dot z=-\mu_2(-A(t))z$, whose ratio is exactly $e^{-\mathcal B_\mu}$, lies on it. To display the nonlinearity of the class, the same panel shows the member $\dot z=-c(\|z\|)\,\mu_2(-A(t))\,z$ with $c(r)=\tfrac12(1+\sin^23r)\in[\tfrac12,1]$: its ratio equals $\exp(-\int_0^Tc(\|z\|)\mu_2(-A)\,dt)$, hence lands in $[e^{-\mathcal B_\mu},e^{-\mathcal B_\mu/2}]$, verified at every $\nu$; the floor holds by the argument of Lemma \ref{lem:mu}, applied along its trajectories (Fig.~\ref{fig:final}a). The distance of the structured plant to this floor illustrates Remark \ref{rem:plant}.

Second, the arrival floors of Corollary \ref{cor:taxonomy} for $\omega(u)=\sqrt u\,(1+u)$, $\bar L=1$: 120 scalar loops $\dot x=-\bar L\,c(t)\,\omega(|x|)\,\mathrm{sign}(x)$, random piecewise-constant $c(t)\in[0.25,1]$, initial conditions over four orders of magnitude. Every recorded time lies above the floor $2\arctan\!\sqrt{\|z_0\|}\,/\bar L$, the radial design with $c\equiv1$ lies on it, and the uniform floor $\pi/\bar L$ caps the curve as $\|z_0\|$ grows (Fig.~\ref{fig:final}b). Arrival is detected at $|x|\le10^{-10}$, an under-estimate of $t_{\mathrm{hit}}$ by at most $2\sqrt{10^{-10}}/\bar L=2\times10^{-5}$; the smallest observed ratio to the floor, $1.0006$, is unaffected.

Third, the frozen-gain spectrum of Proposition \ref{prop:frozen} for the saturated design at $\nu=5$, freezing at $t_s=0.8$: integration of the transition matrix, converged over step sizes $10^{-4}$ to $2.5\times10^{-5}$ to six digits, gives $M=e^{A_s(T-t_s)}\Phi(t_s,0)$ with $\sigma_{\min}(M)=2.08\times10^{-4}$, $\sigma_{\max}(M)=0.141$, the exact range of terminal ratios of this system; $Mz_0$ agrees with direct integration to $10^{-14}$. The speed-envelope floor $e^{-\mathcal B_L}=9.7\times10^{-25}$, from Theorem \ref{thm:necessity}, sits far beneath it, as it must: the floor binds every plant and every profile, while the spectrum is the sharp answer for this one, which quantifies at once Proposition \ref{prop:frozen} and the plant-structure gap of Remark \ref{rem:plant}.

\begin{figure}[t]
\centering
\includegraphics[width=0.61\linewidth]{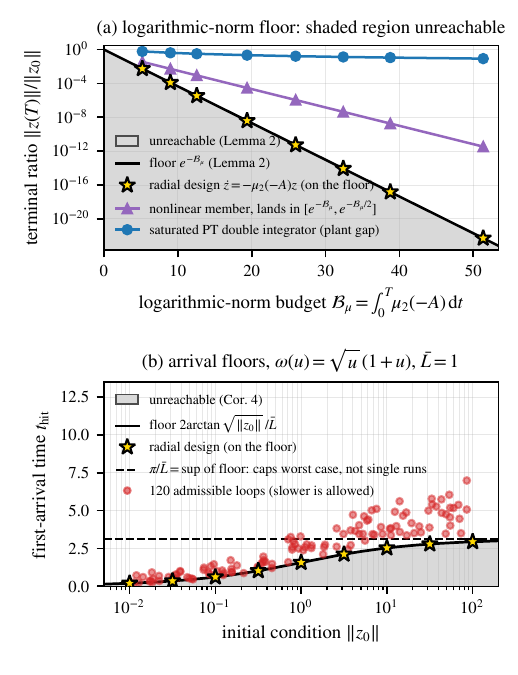}
\caption{Both bounds are floors: the shaded regions are unreachable, while slower behavior above them is always allowed. (a) Terminal ratios against the logarithmic-norm budget $\mathcal B_\mu$: the radial design (stars) sits exactly on the floor $e^{-\mathcal B_\mu}$ of Lemma \ref{lem:mu}, the nonlinear member lands in $[e^{-\mathcal B_\mu},e^{-\mathcal B_\mu/2}]$, and the structured double integrator sits far above, the plant-structure gap of Remark \ref{rem:plant}. (b) First-arrival times of 120 deliberately suboptimal loops ($c(t)\in[0.25,1]$) for the modulus $\sqrt u\,(1+u)$, $\bar L=1$: none enters the shaded region below $2\arctan\sqrt{\|z_0\|}/\bar L$, the radial design (stars) lies on its boundary, and $\pi/\bar L$ is the supremum of the floor, capping the best achievable worst case over all $z_0$, not individual runs.}
\label{fig:final}
\end{figure}

\section{Conclusion}

Time-constrained regulation has an exact price list, written by one comparison lemma. In the linear-growth class, a gain profile supports exact convergence at $T$ if and only if its integral diverges; a finite budget $\mathcal B$ buys at best the contraction $e^{-\mathcal B}$, universally over plants, and the trajectory then converges to a definite point away from the origin. Beyond linear growth, every modulus carries an arrival cost $\int_0^{r}du/\omega$: divergence at the origin makes arrival impossible under any finite budget, finite values are exact thresholds attained inside the class, the classical settling expressions become first-arrival floors, and the fixed-time condition is the two-ended convergence of the same integral, with uniform floor $\big(\int_0^\infty du/\omega\big)/\bar L$. For LTV loops, the integral and essential-supremum forms of the singularity necessity follow from a known two-sided estimate, and the terminal error of gain freezing is described exactly, with the budget floor beneath it. Four questions remain open. First, the plant-constrained frontier of Remark \ref{rem:plant}: the exact attainable contraction of an integrator chain under a given envelope is unknown. Second, a verifiable prescribed-time criterion for non-normal $A(t)$: the explicit integrals of Theorem \ref{thm:ltv} bracket the answer but do not meet. Third, limits in input energy, which Remark \ref{rem:effort} shows is a different quantity from the budget. Fourth, measurement noise: \cite{Aldana2023} suggests that outside the linear-growth class the binding limitation is noise amplification rather than gain.

\section*{Acknowledgment}
This paper is dedicated to the memory of Professor YangQuan Chen, who passed away during its preparation. His vision, generosity, and lifelong devotion to control theory shaped this work and its authors; it is submitted in his honor.

{\footnotesize
\bibliographystyle{IEEEtran}
\bibliography{references}

@article{SongKrstic2017,
  author  = {Song, Yongduan and Wang, Yujuan and Holloway, John and Krsti{\'c}, Miroslav},
  title   = {Time-varying feedback for regulation of normal-form nonlinear systems in prescribed finite time},
  journal = {Automatica},
  volume  = {83},
  pages   = {243--251},
  year    = {2017}
}

@article{HollowayKrstic2019,
  author  = {Holloway, John and Krsti{\'c}, Miroslav},
  title   = {Prescribed-time observers for linear systems in observer canonical form},
  journal = {IEEE Transactions on Automatic Control},
  volume  = {64},
  number  = {9},
  pages   = {3905--3912},
  year    = {2019}
}

@article{Krishnamurthy2020,
  author  = {Krishnamurthy, Prashanth and Khorrami, Farshad and Krsti{\'c}, Miroslav},
  title   = {A dynamic high-gain design for prescribed-time regulation of nonlinear systems},
  journal = {Automatica},
  volume  = {115},
  pages   = {108860},
  year    = {2020}
}

@article{ZhouShi2021,
  author  = {Zhou, Bin and Shi, Yang},
  title   = {Prescribed-time stabilization of a class of nonlinear systems by linear time-varying feedback},
  journal = {IEEE Transactions on Automatic Control},
  volume  = {66},
  number  = {12},
  pages   = {6123--6130},
  year    = {2021}
}

@article{Orlov2022,
  author  = {Orlov, Yury},
  title   = {Time space deformation approach to prescribed-time stabilization: {S}ynergy of time-varying and non-{L}ipschitz feedback designs},
  journal = {Automatica},
  volume  = {144},
  pages   = {110485},
  year    = {2022}
}

@article{OrlovCSL2022,
  author  = {Orlov, Yury and Verdes Kairuz, Ram{\'o}n I. and Aguilar, Luis T.},
  title   = {Prescribed-time robust differentiator design using finite varying gains},
  journal = {IEEE Control Systems Letters},
  volume  = {6},
  pages   = {620--625},
  year    = {2022}
}

@article{Espitia2022,
  author  = {Espitia, Nicol{\'a}s and Steeves, Drew and Perruquetti, Wilfrid and Krsti{\'c}, Miroslav},
  title   = {Sensor delay-compensated prescribed-time observer for {LTI} systems},
  journal = {Automatica},
  volume  = {135},
  pages   = {110005},
  year    = {2022}
}

@article{Ye2024,
  author  = {Ye, Hefu and Wen, Changyun and Song, Yongduan},
  title   = {Decentralized prescribed-time control for interconnected nonlinear systems via output-feedback},
  journal = {Automatica},
  pages   = {111571},
  year    = {2024}
}

@article{SongSurvey2023,
  author  = {Song, Yongduan and Ye, Hefu and Lewis, Frank L.},
  title   = {Prescribed-time control and its latest developments},
  journal = {IEEE Transactions on Systems, Man, and Cybernetics: Systems},
  volume  = {53},
  year    = {2023}
}

@article{SongSu2023,
  author  = {Song, Yongduan and Su, Jie},
  title   = {A unified {L}yapunov characterization for finite time control and prescribed time control},
  journal = {International Journal of Robust and Nonlinear Control},
  volume  = {33},
  number  = {4},
  pages   = {2930--2949},
  year    = {2023}
}

@article{Shakouri2022,
  author  = {Shakouri, Amir},
  title   = {On the prescribed-time attractivity and frozen-time eigenvalues of linear time-varying systems},
  journal = {Automatica},
  volume  = {140},
  pages   = {110173},
  year    = {2022}
}

@article{Zhou2020,
  author  = {Zhou, Bin},
  title   = {Finite-time stability analysis and stabilization by bounded linear time-varying feedback},
  journal = {Automatica},
  volume  = {121},
  pages   = {109191},
  year    = {2020}
}

@article{Aldana2023,
  author  = {Aldana-L{\'o}pez, Rodrigo and Seeber, Richard and Haimovich, Hernan and G{\'o}mez-Guti{\'e}rrez, David},
  title   = {On inherent limitations in robustness and performance for a class of prescribed-time algorithms},
  journal = {Automatica},
  volume  = {158},
  pages   = {111284},
  year    = {2023}
}

@article{Aldana2019,
  author={Aldana-L{\'o}pez, Rodrigo and G{\'o}mez-Guti{\'e}rrez, David and Jim{\'e}nez-Rodr{\'\i}guez, Esteban and S{\'a}nchez-Torres, Juan Diego and Defoort, Michael},
  title   = {Enhancing the settling time estimation of a class of fixed-time stable systems},
  journal = {International Journal of Robust and Nonlinear Control},
  volume  = {29},
  pages={4135--4148},
  year    = {2019}
}

@article{Haimo1986,
  author  = {Haimo, V. T.},
  title   = {Finite time controllers},
  journal = {SIAM Journal on Control and Optimization},
  volume  = {24},
  number  = {4},
  pages   = {760--770},
  year    = {1986}
}

@article{BhatBernstein2000,
  author  = {Bhat, Sanjay P. and Bernstein, Dennis S.},
  title   = {Finite-time stability of continuous autonomous systems},
  journal = {SIAM Journal on Control and Optimization},
  volume  = {38},
  number  = {3},
  pages   = {751--766},
  year    = {2000}
}

@article{MoulayPerruquetti2008,
  author  = {Moulay, Emmanuel and Perruquetti, Wilfrid},
  title   = {Finite time stability conditions for non-autonomous continuous systems},
  journal = {International Journal of Control},
  volume  = {81},
  number  = {5},
  pages   = {797--803},
  year    = {2008}
}

@article{Osgood1898,
  author  = {Osgood, William F.},
  title   = {Beweis der {E}xistenz einer {L}{\"o}sung der {D}ifferentialgleichung $dy/dx=f(x,y)$ ohne {H}inzunahme der {C}auchy--{L}ipschitz'schen {B}edingung},
  journal = {Monatshefte f{\"u}r Mathematik und Physik},
  volume  = {9},
  pages   = {331--345},
  year    = {1898}
}

@article{Polyakov2012,
  author  = {Polyakov, Andrey},
  title   = {Nonlinear feedback design for fixed-time stabilization of linear control systems},
  journal = {IEEE Transactions on Automatic Control},
  volume  = {57},
  number  = {8},
  pages   = {2106--2110},
  year    = {2012}
}

@article{Vrabel2020,
  author  = {Vr{\'a}bel, R{\'o}bert},
  title   = {A note on uniform exponential stability of linear periodic time-varying systems},
  journal = {IEEE Transactions on Automatic Control},
  volume  = {65},
  number  = {4},
  pages   = {1647--1651},
  year    = {2020}
}

@book{Coppel1965,
  author    = {Coppel, W. A.},
  title     = {Stability and Asymptotic Behavior of Differential Equations},
  publisher = {D. C. Heath},
  address   = {Boston, MA},
  year      = {1965}
}

@article{Strom1975,
  author  = {Str{\"o}m, Torsten},
  title   = {On logarithmic norms},
  journal = {SIAM Journal on Numerical Analysis},
  volume  = {12},
  number  = {5},
  pages   = {741--753},
  year    = {1975}
}

@article{Soderlind2006,
  author  = {S{\"o}derlind, Gustaf},
  title   = {The logarithmic norm. {H}istory and modern theory},
  journal = {BIT Numerical Mathematics},
  volume  = {46},
  pages   = {631--652},
  year    = {2006}
}

@article{ZhangZhouDuan2024,
  author  = {Zhang, Kang-Kang and Zhou, Bin and Duan, Guang-Ren},
  title   = {Global prescribed-time output feedback control of a class of uncertain nonlinear systems by linear time-varying feedback},
  journal = {Automatica},
  volume  = {165},
  pages   = {111680},
  year    = {2024}
}

@article{LiMaTian2024,
  author  = {Li, Yangang and Ma, Ruicheng and Tian, Xiaomei},
  title   = {Prescribed-time tracking with prescribed performance for a class of strict-feedback nonlinear systems},
  journal = {Journal of the Franklin Institute},
  volume  = {361},
  pages   = {106638},
  year    = {2024}
}
}

\end{document}